\documentclass{amsart}
\usepackage{amsmath, amssymb}
\usepackage{nicematrix}
\usepackage{booktabs}
\usepackage[all]{xy}
\usepackage{enumitem}
\usepackage{hyperref}

\theoremstyle{plain}
\newtheorem{theorem}{Theorem}[section]
\newtheorem{proposition}[theorem]{Proposition} 

\newtheorem{lemma}[theorem]{Lemma}

\theoremstyle{definition}

\newtheorem{remark}[theorem]{Remark}
\newtheorem{claim}[theorem]{Claim}

\newenvironment{claimproof}
	{\proof}
	{\endproof}

\newcommand{\Aut}{\operatorname{Aut}}
\newcommand{\Fix}{\operatorname{Fix}}
\newcommand{\Gal}{\operatorname{Gal}}
\newcommand{\gon}{\operatorname{gon}}
\newcommand{\ord}{\operatorname{ord}}
\newcommand{\rank}{\operatorname{rank}}
\newcommand{\Ker}{\operatorname{Ker}}
\newcommand{\img}{\operatorname{Im}}
\newcommand{\id}{\operatorname{id}}

\title[Galois lines for a canonical curve of genus 4, III]{Galois lines for a canonical curve of genus 4, III: non-cyclic Galois lines}

\author[S.~Kato]{Shotaro Kato}
\address{Graduate School of Science and Technology, Niigata University, 8050 Ikarashi-ninocho Nishi-ku, Niigata 950-2181, Japan}
\email{shotarokato0308@gmail.com}
\author[J.~Komeda]{Jiryo Komeda}
\address{Department of Mathematics, Kanagawa Institute of Technology, 1030 Shimo-Ogino, Atsugi 243-0292, Japan}
\email{komeda@gen.kanagawa-it.ac.jp}

\author[T.~Takahashi]{Takeshi Takahashi}
\address{Education Center for Engineering and Technology, Faculty of Engineering, Niigata University, 8050 Ikarashi-ninocho Nishi-ku, Niigata 950-2181, Japan}
\email{takeshi@eng.niigata-u.ac.jp}

\subjclass{Primary: 14H50; Secondary: 14H37, 14H45}
\keywords{canonical curve of genus $4$, Galois line, automorphism group, covering of curves, projection}
\thanks{
	This work was supported by JSPS KAKENHI Grant Numbers JP18K03228, JP19K03441 and 	JP25K06930. %\\
}

\begin{document}

\begin{abstract}
	Let $C \subset \mathbb{P}^3$ be a canonical curve of genus $4$ over an algebraically closed field $k$ of characteristic zero. For a line $l \subset \mathbb{P}^3$, we consider the projection $\pi_l: C \to \mathbb{P}^1$ from $l$ and the induced extension of function fields $\pi_l^*: k(\mathbb{P}^1)\hookrightarrow k(C)$. A line $l$ is called an \emph{$S_3$-line} (resp. a \emph{$K_4$-line}) if the extension $k(C)/\pi_l^*(k(\mathbb{P}^1))$ is Galois and its Galois group is isomorphic to the symmetric group $S_3$ on three letters (resp. the Klein four-group $K_4$). We prove that the number of $S_3$-lines (resp.\ $K_4$-lines) is at most $10$ (resp.\ $15$).
\end{abstract}

\maketitle

%%%%%%%%%%%%%%%%%%%%%%%%%%%%%%%%%%%%%%%%%%%%%%%%%
\section{Introduction and Theorem}\label{section:introduction}

Let $k$ be an algebraically closed field of characteristic~$0$, and let $C \subset \mathbb{P}^3$ be a canonical curve of genus~$4$ over $k$, and let $l \subset \mathbb{P}^3$ be a line.

In a series of papers, we have investigated Galois lines for such curves. In~\cite{KomedaTakahashi2021}, we studied non-skew cyclic Galois lines, namely cyclic Galois lines $l$ such that $C \cap l \neq \emptyset$. We proved that the number of $C_4$-lines and $C_5$-lines is at most one. We also explicitly described the curve in the special case where $C$ admits two cyclic trigonal morphisms. In~\cite{KomedaTakahashi2022}, we continued this study by investigating skew cyclic Galois lines, that is, cyclic Galois lines $l$ satisfying $C \cap l = \emptyset$. In this situation, a skew cyclic line is necessarily a $C_6$-line. We proved that the number of such lines is equal to $0,1,3$, or $9$, and completely determined the canonical curves of genus~$4$ which admit nine $C_6$-lines.

In the present paper, we turn to the remaining case of non-cyclic Galois lines for such curves. More precisely, we study $S_3$-lines and $K_4$-lines for canonical curves of genus~$4$, and establish upper bounds for their numbers. Taken together with~\cite{KomedaTakahashi2021, KomedaTakahashi2022}, these results contribute to a substantial understanding of Galois lines for canonical curves of genus~$4$.

It is well known that $C$ is a complete intersection of type $(2,3)$ in $\mathbb{P}^3$. For a line $l$, we consider the projection $\pi_l : C \to \mathbb{P}^1$ from $l$, together with the induced extension of function fields $\pi_l^* : k(\mathbb{P}^1) \hookrightarrow k(C)$. Since $\deg C = 6$ and $C$ is not hyperelliptic, the degree of $\pi_l$ satisfies 
\[3 \le \deg \pi_l \le 6.\] 
A line $l$ is called a \emph{Galois line} if the extension $k(C)/\pi_l^*(k(\mathbb{P}^1))$ is Galois. Moreover, a Galois line $l$ is called an \emph{$S_3$-line} (resp. a \emph{$K_4$-line}) if its Galois group is isomorphic to the symmetric group $S_3$ on three letters (resp. the Klein four-group $K_4$).

If $l$ is a Galois line, we denote by $G_l$ the group
\[
	G_l := \{\sigma \in \Aut(C) \mid \pi_l \circ \sigma = \pi_l\},
\]
which is naturally isomorphic to the Galois group $\Gal(k(C)/\pi_l^*(k(\mathbb{P}^1)))$. We note that, for each $\sigma \in \Aut(C)$, there exists a unique projective transformation of $\mathbb{P}^3$ whose restriction to $C$ coincides with $\sigma$, since the inclusion $C \subset \mathbb{P}^3$ is given by the canonical embedding.

The main theorem of the present paper is the following. 
\begin{theorem}\label{theorem:main}
	Let $C \subset \mathbb{P}^3$ be a canonical curve of genus $4$ over an algebraically closed field of characteristic $0$. Then the number of $S_3$-lines is at most $10$, and the number of $K_4$-lines is at most $15$. 
\end{theorem}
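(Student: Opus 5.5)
The plan is to reduce the count of lines to a count of subgroups of $\Aut(C)$, and then bound the number of such subgroups using the classification of automorphism groups of genus-$4$ curves.

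\emph{Step 1: injectivity.} First we show that $l \mapsto G_l$ is injective on Galois lines. If $l$ is Galois, then for each point $t\in\mathbb{P}^1$ the fibre $\pi_l^{-1}(t)$ is a $G_l$-orbit of degree $|G_l|$. Adding the base points $C\cap l$ (with multiplicities) gives the divisor $H\cdot C$ for a hyperplane $H\supset l$. Hence $l$ is the base locus of the pencil of hyperplanes cut out by the $G_l$-orbits together with the fixed part $C\cap l$. Here $C\cap l$ is empty for $S_3$-lines, since $\deg\pi_l=6$. For $K_4$-lines it is a degree-$2$ divisor, since $\deg\pi_l=4$; in that case it is the $G_l$-invariant divisor $K_C-(\text{orbit})$. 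In either case $l$ is recovered from $G_l$, because $\sigma\in G_l$ extends to a projective transformation fixing every hyperplane through $l$. So the number of $S_3$-lines (resp.\ $K_4$-lines) is at most the number of subgroups $H\le\Aut(C)$ with $H\cong S_3$ (resp.\ $K_4$) and $C/H\cong\mathbb{P}^1$.

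\emph{Step 2: signatures.} Next, Riemann--Hurwitz restricts the branching data. For $H\cong S_3$ with rational quotient,
\[6=6\Bigl(-2+\sum_i\bigl(1-\tfrac1{e_i}\bigr)\Bigr),\qquad e_i\in\{2,3\},\]
so the signature is $(0;2^6)$ or $(0;2^2,3^3)$. For $H\cong K_4$ we get $(0;2^7)$. By Accola's formula, the three involutions $\tau_1,\tau_2,\tau_3$ of $K_4$ satisfy $g(C/\tau_1)+g(C/\tau_2)+g(C/\tau_3)=4$. These conditions constrain the fixed-point numbers of the involutions and elements of order $3$ that can occur, and hence their conjugacy classes in $\Aut(C)$.

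\emph{Step 3: counting.} For each group $A$ on the list of automorphism groups of genus-$4$ curves (Wiman, Kuribayashi--Kimura), together with its action on $H^0(K_C)$, we count the subgroups $S_3$ and $K_4$ with the required signature. We expect the extremal case to be Bring's curve, with $A=S_5$. There the $10$ subgroups $\langle(123),(12)\rangle$ and the $15$ subgroups $\langle(12),(34)\rangle$ should be exactly those with rational quotient, while the twisted $S_3$'s and the $K_4\subset A_5$ should fail the signature test. For every other $A$ the count should be smaller. It is convenient to organise the count by the relevant involutions. Each $K_4$-line contains three involutions whose quotients have genera summing to $4$. Each $S_3$-line contains one $C_3$ whose quotient is hyperelliptic of genus~$\le 1$, more precisely a double cover of $\mathbb{P}^1$.

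\emph{Main obstacle.} The hard part is Step 3. It means running through the many groups on the genus-$4$ list (or handling large $A$ by a more conceptual argument) and correctly identifying, via the character of $A$ on the $4$-dimensional canonical representation, which conjugacy classes of elements have the required numbers of fixed points. If I can do so, I would first try to avoid the full list. The idea is to bound the number of good $K_4$'s by counting pairs of commuting involutions of the right fixed-point type, and the number of good $S_3$'s by counting elements of order $3$ with $C/\langle\rho\rangle$ of genus~$\le 1$, using $|A|\le 120$. I would fall back on the case-by-case list only where this counting is not sharp.
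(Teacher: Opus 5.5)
\textbf{There is a genuine gap: the bound is asserted, not proved.}

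Your Steps~1 and~2 are fine. A Galois line is recovered from $G_l$ (equivalently from the pencil $C\to C/G_l$, cf.\ Proposition~\ref{prop:pi_l-determines-l}). The signatures $(0;2^6)$, $(0;2^2,3^3)$, $(0;2^7)$ and the Accola relation are correct. Your check on Bring's curve is also right: the ten $\langle(123),(12)\rangle$ have rational quotient, while the twisted $S_3$'s give genus $1$; the fifteen $\langle(12),(34)\rangle$ have fixed-point pattern $6,6,2$ and rational quotient, while the five $K_4\subset A_5$ give genus $1$.

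But the theorem \emph{is} the assertion that no other curve does better, and for that you only write that the count ``should be smaller.'' Step~3 is not an obstacle left over after the proof; it is the proof, and it is not carried out. Doing it means running through every group on the genus-$4$ list, for every stratum and every inequivalent action on $H^0(K_C)$, and counting subgroups.

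Moreover, your filter ``$H\cong S_3$ with $C/H\cong\mathbb{P}^1$'' is weaker than ``$H=G_l$ for a line.'' A subgroup of signature $(0;2^2,3^3)$ has a $C_3$ with six fixed points, i.e.\ a cyclic trigonal pencil, and never comes from an $S_3$-line. Such subgroups must be excluded or shown not to push the count past $10$. The route itself is legitimate; it is essentially the computer-assisted method mentioned in the introduction. As written, though, the decisive count is missing.

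\textbf{Your proposed shortcut around the list fails as stated.} The bound $|A|\le 120$ by itself caps nothing at $10$ or $15$; you need structural information about $A$.

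Your criterion for the $C_3$ is also wrong. In Proposition~\ref{prop:generator-of-Gal-S_3-line}, $\tau$ fixes the plane $Z=W$ pointwise and meets $C$ there transversally, because it acts by $-1$ on tangent lines at fixed points. So each involution of $G_l$ has $6$ fixed points, and Riemann--Hurwitz, $6=-12+18+2r$, forces $r=0$. The order-$3$ element is therefore fixed-point free and $C/\langle\rho\rangle$ has genus $2$, not $\le 1$. Counting order-$3$ elements with quotient genus $\le 1$ counts the wrong elements.

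\textbf{The missing idea.} The paper gets structure without the tables by letting automorphisms act on a trigonal pencil.
\begin{itemize}
\item If $\rank Q=3$, the whole group generated by the $G_l$ preserves the unique $g^1_3$.
\item If $\rank Q=4$, the order-$3$ elements of $S_3$-lines and the trace-zero involutions of $K_4$-lines preserve both $g^1_3$ and $h^1_3$ (Lemma~\ref{lemma:actions-for-trigonal-maps}); the other two involutions swap them.
\end{itemize}
This gives a homomorphism $\varphi$ from the group they generate to $\Aut(\mathbb{P}^1)$. Distinct lines give distinct $C_3$'s, resp.\ distinct trace-zero involutions (Propositions~\ref{prop:distinct_l_induce_distinct_G_l} and~\ref{prop:two-G_ls-belonging-to-K_4-lines}).

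When $\Ker\varphi=1$, the finite subgroups of $\Aut(\mathbb{P}^1)$ contain at most $20$ elements of order $3$ (so $10$ lines) and at most $15$ involutions (so $15$ lines). The dihedral case needs extra work: $m\le 12$ from the branch points, or the description $\pi_l=\Phi_{|2P_1+2P_i|}$. When $\Ker\varphi\neq 1$, the trigonal pencil is Galois, and explicit normal forms give the smaller bounds $4$, $2$, $5$, $9$.
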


The first author~\cite{Kato2024, KatoMaster2026} completely determined the numbers of $S_3$-lines and $K_4$-lines by explicit computations. That work is based on the classification of automorphism groups of compact Riemann surfaces of genus~$4$ due to I.~Kuribayashi and A.~Kuribayashi \cite{KuribayashiKuribayashi1986, KuribayashiKuribayashi1990}, together with computer calculations using GAP~\cite{GAP4}. In the present paper, we give a proof of Theorem~\ref{theorem:main} which does not rely on these classification tables nor on computer computations.

This paper is organized as follows. In Section~\ref{section:preliminaries}, we recall basic facts on canonical curves of genus~$4$ and Galois lines that will be used throughout the paper. Section~\ref{section:proof} is devoted to the proof of Theorem~\ref{theorem:main}. In Section~\ref{section:examples}, we present an explicit example illustrating the sharpness of the bounds in Theorem~\ref{theorem:main}.

%%%%%%%%%%%%%%%%%%%%%%%%%%%%%%%%%%%%%%%%%%%%%%%
\section{Preliminaries} \label{section:preliminaries}

Let $(X:Y:Z:W)$ be homogeneous coordinates on $\mathbb{P}^3$, and let $C \subset \mathbb{P}^3$ be the canonical curve of genus $4$ introduced in Section~\ref{section:introduction}. Two pencils $\Phi, \Phi': C \to \mathbb{P}^1$ are said to be equivalent if there exists a projective transformation $T \in \Aut(\mathbb{P}^1)$ such that $\Phi' = T \circ \Phi$. The following are well-known facts. 

\begin{proposition}[p.~118 of \cite{ACGH1985}, p.~298 of \cite{GriffithsHarris1994}] \label{prop:well-known-facts}
	The curve $C$ is a $(2,3)$-complete intersection; that is, the homogeneous ideal $I(C) \subset k[X,Y,Z,W]$ of $C$ is generated by a quadratic form $Q$ and a cubic form $F$. The degree of $C$ is $6$. The surface $\{Q=0\}$ is a unique quadric surface that contains $C$. The gonality $\gon(C)$ of $C$ is equal to $3$. 
	If $\rank Q = 3$, then $C$ has a unique trigonal morphism $C \to \mathbb{P}^1$ (up to equivalence), which is given by the projection from the vertex of the quadric surface $\{Q=0\}$. 
	If $\rank Q = 4$, then $C$ has exactly two trigonal morphisms $C \to \mathbb{P}^1$ (up to equivalence), which are given by the two projections of the product $\{Q=0\} \cong \mathbb{P}^1 \times \mathbb{P}^1$.
\end{proposition}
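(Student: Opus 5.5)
The plan is to derive everything from Riemann--Roch on $C$, using the canonical embedding $C\subset\mathbb{P}^3$ given by $|K_C|$, together with geometric Riemann--Roch. First, $\deg C=\deg K_C=2g-2=6$. For the quadric, note that $h^0(\mathcal{O}_C(2))=h^0(2K_C)=3g-3=9$, while quadratic forms on $\mathbb{P}^3$ form a $10$-dimensional space. So the restriction map has a nonzero kernel, and some quadric $Q$ contains $C$. This $Q$ is irreducible: if $Q$ were a product of two linear forms, the irreducible curve $C$ would lie in a plane, contradicting nondegeneracy. Hence $\rank Q\ge 3$. For uniqueness, suppose $Q'$ is a second quadric containing $C$ and not proportional to $Q$. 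Then $Q\cap Q'$ is a curve of degree $4$ containing the curve $C$ of degree $6$, which is impossible.

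For the cubic, I compare dimensions again. We have $h^0(\mathcal{O}_C(3))=5g-5=15$, and cubic forms form a $20$-dimensional space, so the kernel of restriction has dimension at least $5$. The multiples $Q\cdot L$ with $L$ linear give only $4$ of these dimensions, so there is a cubic $F\notin (Q)$ vanishing on $C$.

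Since $Q$ is irreducible, $Q$ and $F$ share no component, and $\{Q=F=0\}$ is a complete intersection of degree $6$ containing $C$. Comparing degrees, the cycle $\{Q=F=0\}$ equals $C$ with multiplicity one. Hence the scheme is generically reduced. A complete intersection is Cohen--Macaulay, so it has no embedded points, and therefore it is reduced and equal to $C$. The ideal $(Q,F)$ of a complete intersection is saturated, so $I(C)=(Q,F)$.

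For the gonality, $C$ is non-hyperelliptic (it is canonically embedded), so $\gon(C)\ge 3$. Let $D$ be an effective divisor with $\deg D=3$ and $r(D)=1$. Geometric Riemann--Roch gives $\dim\overline{D}=\deg D-1-r(D)=1$, so $D$ spans a line $\ell$ with $\#(\ell\cap C)\ge 3$ counted with multiplicity. Since $\ell$ meets $Q$ in at least $3>2$ points, $\ell\subset Q$.

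Conversely, every line $\ell\subset Q$ satisfies $\ell\cdot C=\ell\cdot\{F=0\}|_Q=3$. The lines of one ruling of $Q$ therefore cut out a base-point-free $g^1_3$. On the cone ($\rank Q=3$), this $g^1_3$ is the projection from the vertex. On $Q\cong\mathbb{P}^1\times\mathbb{P}^1$ ($\rank Q=4$), each ruling gives one of the two projections. In particular $\gon(C)=3$.

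It remains to show that every trigonal morphism arises in this way. The divisors of a fixed $g^1_3$ give a one-parameter family of lines contained in $Q$, and these lines cover $C$. A one-dimensional family of lines on $Q$ is a ruling: there is exactly one ruling when $\rank Q=3$, and there are two when $\rank Q=4$. So each $g^1_3$ is the pencil of one ruling, and distinct rulings give inequivalent pencils, because their fibres are different lines.

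I expect the main delicacy to be twofold. First, the scheme-theoretic step that $(Q,F)$ is the full homogeneous ideal, which uses reducedness via the Cohen--Macaulay property. Second, the step that a one-parameter family of trisecant lines must be a whole ruling, rather than, say, the family of lines through a fixed point. The latter possibility is ruled out because lines on $Q$ through a general point number at most two.
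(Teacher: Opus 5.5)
Your argument is correct and is essentially the standard proof in the references the paper cites; the paper gives no proof of its own. In that standard proof, Riemann--Roch counts give the unique irreducible quadric and an extra cubic, B\'ezout's theorem plus the Cohen--Macaulay property give $I(C)=(Q,F)$, and geometric Riemann--Roch identifies each $g^1_3$ with the pencil cut out by a ruling of $\{Q=0\}$. The one detail you should state explicitly is that the vertex of the cone does not lie on $C$: all partials of $Q$ vanish there, so the smooth curve $V(Q,F)=C$ would otherwise be singular at it. This is what makes the ruling of the cone base-point-free and the projection from the vertex a degree-$3$ morphism.
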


Let $l \subset \mathbb{P}^3$ be a line and $\pi_l:C \rightarrow \mathbb{P}^1$ the projection from $l$.  

\begin{proposition}[\cite{KomedaTakahashi2021}] \label{prop:pi_l-determines-l} 
	Assume $\deg \pi_l \geq 4$. Then, $\pi_l=\pi_{l'}$ (up to equivalence) if and only if $l=l'$. 
\end{proposition}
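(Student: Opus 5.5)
The plan is to translate both projections into linear series on $C$ and use Riemann--Roch to show that the base divisor, and hence the line, is recovered from the pencil. Only the implication ``$\pi_l=\pi_{l'}$ up to equivalence $\Rightarrow l=l'$'' needs proof; the converse is trivial.

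\emph{Setting up the pencil.} For a line $l$, the planes $H_t \supset l$, $t \in \mathbb{P}^1$, cut out hyperplane sections $H_t\cdot C \in |K_C|$. Let $B_l$ be the fixed part of this pencil; this is the divisor supported on $C \cap l$, counted with multiplicities. Then
\[
H_t\cdot C = D_t + B_l,
\]
where $\{D_t\}$ is the base-point-free pencil defining $\pi_l$. Thus $\deg D_t = \deg \pi_l =: d$ and $\deg B_l = 6-d$. The hypothesis $d \ge 4$ gives $\deg B_l \le 2$.

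\emph{Step 1: $B_l$ is determined by $\pi_l$.} Suppose $\pi_{l'} = T\circ \pi_l$. Then both maps have the same fibres, so $l$ and $l'$ induce the same pencil $\{D_t\}$ and the same divisor class $D$. Both $B_l$ and $B_{l'}$ are effective divisors in $|K_C - D|$. I claim $h^0(K_C-D)\le 1$, which forces $B_l = B_{l'}$. I check this by the degree of $K_C-D$:
\begin{itemize}
\item If $d=6$, then $\deg(K_C - D) = 0$ and the only effective divisor is $0$.
\item If $d=5$, a degree-$1$ effective divisor moving in a pencil would make $C$ rational, which is impossible.
\item If $d=4$, then $h^0 \ge 2$ would give a $g^1_2$, contradicting that $C$ is not hyperelliptic (equivalently $\gon(C)=3$ by Proposition~\ref{prop:well-known-facts}).
\end{itemize}
For $d=4$ one can also argue via Riemann--Roch, $h^0(K_C-D)=h^0(D)-d-1+g$, together with $h^0(D)=2$. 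The latter holds because $h^0(D)\ge 3$ would give a $g^2_4$, and then subtracting a point would give a $g^1_3$ with a base point, which is impossible here.

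\emph{Step 2: recovering the line.} Take two distinct values $t\neq s$. The divisors $D_t+B$ and $D_s+B$ are hyperplane sections of the canonically embedded curve, where $B := B_l = B_{l'}$. Since $C\subset\mathbb{P}^3$ is nondegenerate and embedded by $|K_C|$, a divisor in $|K_C|$ determines the corresponding hyperplane uniquely: the section of $K_C$ is determined up to scalar, and $H^0(\mathbb{P}^3,\mathcal{O}(1))\to H^0(C,K_C)$ is an isomorphism. Hence the plane through $l$ cutting $D_t+B$ coincides with the plane through $l'$ cutting $D_{T(t)}+B$, up to the reparametrization by $T$. The same holds for $s$. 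Therefore
\[
l = H_t\cap H_s = H'_{T(t)}\cap H'_{T(s)} = l'.
\]

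The only delicate point is Step 1. One must make sure the base divisor is taken scheme-theoretically, including tangency of $l$ to $C$, so that $D_t+B_l$ is literally the hyperplane section. One must also handle the $d=4$ case via non-hyperellipticity. Once $B_l=B_{l'}$ is established, the rest is formal.
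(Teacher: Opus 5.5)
The paper does not prove this statement; it is quoted from \cite{KomedaTakahashi2021}, so there is no in-text argument to compare with. Your proof is correct and complete.
\begin{itemize}
\item In Step~1, the bound $h^0(K_C-D)\le 1$ holds for each $d=6,5,4$ (the last because $C$ is not hyperelliptic). This forces the two effective divisors $B_l,B_{l'}\in|K_C-D|$ to coincide.
\item In Step~2, the restriction $H^0(\mathbb{P}^3,\mathcal{O}(1))\to H^0(C,K_C)$ is an isomorphism, so each divisor $D_t+B$ is cut out by exactly one plane. Hence $H_t=H'_{T(t)}$ for all $t$, and $l=l'$.
\end{itemize}

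\textbf{A correction to your optional remark for $d=4$.} Removing a point from a $g^2_4$ gives a $g^1_3$ that need not have a base point. Base-point-free $g^1_3$'s certainly exist on $C$, so that sentence does not rule out $h^0(D)\ge 3$. Two valid replacements:
\begin{itemize}
\item The systems $|D-P|$, $P\in C$, would be infinitely many pairwise inequivalent trigonal pencils. Each is base-point-free, since otherwise $C$ is hyperelliptic. This contradicts Proposition~\ref{prop:well-known-facts}.
\item More simply, by Riemann--Roch $h^0(D)\ge 3$ is equivalent to $h^0(K_C-D)\ge 2$, which is the $g^1_2$ case you already excluded.
\end{itemize}
Your main bullet already settles $d=4$, so this does not affect the proof.

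\textbf{A more hands-on alternative.} You can avoid the base divisor altogether.
\begin{itemize}
\item In characteristic $0$, a general fibre $\pi_l^{-1}(t)$ consists of $d\ge 4$ distinct points lying in $H_t$.
\item These points cannot be collinear. A line meeting $C$ in four points meets $V(Q)$ in more than two points and $V(F)$ in more than three, hence lies in $V(Q)\cap V(F)=C$, which is impossible.
\item So the fibre spans $H_t$. Since the same fibre equals $\pi_{l'}^{-1}(T(t))\subset H'_{T(t)}$, you get $H_t=H'_{T(t)}$ for general $t$, hence $l=l'$.
\end{itemize}
This is geometric Riemann--Roch in elementary form, using only that $C$ is a $(2,3)$ complete intersection. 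Your version is more intrinsic and treats lines meeting or tangent to $C$ uniformly. Its cost is the scheme-theoretic bookkeeping of $B_l$ that you correctly flag.
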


\begin{lemma} \label{lemma:representation-when-l:X=Y=0}
	Let $l$ be a Galois line defined by $X=Y=0$. Assume that $\deg \pi_l \geq 4$. Then every element $\sigma \in G_l$ is represented by a matrix of the following form 
	\begin{equation} \label{eq:matrix-when-l:X=Y=0}
		\begin{pmatrix}
			1 & 0 & 0 & 0 \\
			0 & 1 & 0 & 0 \\
			* & * & * & * \\
			* & * & * & *
		\end{pmatrix}.
	\end{equation}
\end{lemma}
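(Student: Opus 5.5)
Since $C$ is canonically embedded, each $\sigma \in G_l$ extends uniquely to a projective transformation of $\mathbb{P}^3$, given by a matrix $A_\sigma$ determined up to a nonzero scalar. The plan is to show that $A_\sigma$ preserves the pencil of planes through $l$ plane by plane, and then to normalize the scalar so that the top-left $2\times 2$ block becomes the identity.

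The planes containing $l=\{X=Y=0\}$ are $H_{(a:b)} = \{bX - aY = 0\}$, and $\pi_l$ is the restriction of $(X:Y:Z:W)\mapsto (X:Y)$ to $C$, extended across $C \cap l$ by continuity. The condition $\pi_l\circ\sigma=\pi_l$ says that for a general point $P\in C$ (off $l$), the image $\sigma(P)$ lies on the same plane $H_{(a:b)}$ as $P$. Since the divisor $\pi_l^*(a:b)$ equals $(H_{(a:b)}\cdot C)$ minus the base locus $D_0=(l\cdot C)$, this means $\sigma$ maps the hyperplane section $H_{(a:b)}\cap C$ to $H_{(a:b)}\cap C$, after accounting for the fixed base part.

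The key step is to show that $A_\sigma$ carries each $H_{(a:b)}$ to itself. For general $(a:b)$, the moving part $\pi_l^{-1}(a:b)$ has $\deg\pi_l\ge 4$ points, and these lie in $H_{(a:b)}$ and in $\sigma^{-1}$ of it. So I need these points to span $H_{(a:b)}$, i.e.\ not to be collinear. If the general fiber were collinear, then $\ge 4$ points of $C$ would lie on a line; but $C$ lies on a quadric, so that line would lie on the quadric, and the lines of the quadric meet $C$ in only 3 points (the trigonal pencils of Proposition~\ref{prop:well-known-facts}). Hence, by B\'ezout on the quadric, a line meets $C$ in at most 3 points unless it is contained in the quadric, and in that case it meets $C$ in exactly 3 points. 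This contradicts $\deg\pi_l\ge 4$.

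It follows that the image plane $A_\sigma(H_{(a:b)})$ contains the $\ge 4$ non-collinear fiber points, so it equals $H_{(a:b)}$ for general $(a:b)$, and hence for all $(a:b)$ by closedness. Consequently $A_\sigma$ preserves the line $l$ (as the intersection of these planes), so the dual action sends the linear forms $X$ and $Y$ into their span: $X\circ A_\sigma^{-1} = \alpha X + \beta Y$, and similarly for $Y$. Moreover, every form $bX-aY$ is an eigenvector up to scalar. A $2\times2$ matrix for which every vector is an eigenvector is a scalar matrix $\lambda I$, so the first two rows of $A_\sigma$ (acting on coordinates) are $\lambda(1,0,0,0)$ and $\lambda(0,1,0,0)$. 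Dividing by $\lambda$ gives the form \eqref{eq:matrix-when-l:X=Y=0}.

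Alternatively, the plane-preservation step could be read off from Proposition~\ref{prop:pi_l-determines-l}: since $\pi_{\sigma(l)}=\pi_l\circ\sigma^{-1}$ up to equivalence, we get $\sigma(l)=l$, and then one argues about the induced action on the pencil.

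The main obstacle is the noncollinearity and span argument, which ensures that the projective transformation is genuinely determined on each plane of the pencil. This step needs care with the points of $C\cap l$ and with $\deg\pi_l$ being possibly 4 or 5. Beyond that, it suffices to use general fibers.
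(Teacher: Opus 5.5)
Your proof is correct. It reaches the same intermediate fact as the paper, namely that every plane through $l$ is mapped to itself (equivalently $XY'-YX'\equiv 0$), but by a different mechanism.

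\textbf{The paper's route.} It argues ideal-theoretically. The quadratic form $XY'-YX'$ vanishes on $C$, so it lies in the degree-$2$ part of $I(C)=(Q,F)$, which is $kQ$. The case $XY'-YX'=\mu Q$ with $\mu\neq 0$ is ruled out because then $Q$ would have no terms of degree $2$ in $Z,W$. Geometrically, $V(Q)\supset l$, so each plane through $l$ meets $V(Q)$ in $l$ and one further line, which forces the general fibre to have at most $3$ points. With $\mu=0$, the matrix form follows by comparing coefficients.

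\textbf{Your route.} You argue pointwise. A general fibre consists of $\deg\pi_l\ge 4$ distinct points of $H_{(a:b)}\setminus l$, and $C$ has no $4$-secant lines, so the fibre spans $H_{(a:b)}$. Since $\sigma$ permutes the fibre, $A_\sigma(H_{(a:b)})=H_{(a:b)}$. Your ``every vector is an eigenvector'' step then plays the role of the coefficient comparison.

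\textbf{Comparison.} Both proofs use $\deg\pi_l\ge 4$ for the same purpose, to exclude collinear fibres. The paper's version is a shorter computation but relies on $Q$ spanning $I(C)_2$. Yours needs only the absence of $4$-secant lines, and it makes the geometric content explicit: $\sigma$ fixes each plane of the pencil, hence fixes $l$.

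\textbf{Minor points.} Your remark that $\sigma$ maps the whole section $H_{(a:b)}\cap C$ to itself ``after accounting for the base part'' is premature before you know $\sigma(l)=l$. The alternative via Proposition~\ref{prop:pi_l-determines-l} is also unnecessary. Neither affects the argument, which only uses the moving part of the fibre.
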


\begin{proof}
	We may assume that the projection $\pi_l$ is given by
	\[
		\pi_l \colon (X:Y:Z:W) \longmapsto (X:Y).
	\]
	Let $(a_{ij})$ be a matrix representing $\sigma \in \Aut(C)$. 
	Namely, $\sigma$ acts on $\mathbb{P}^3$ as
	\[
		(X:Y:Z:W) \mapsto (X':Y':Z':W'), %\quad 
		{}^t(X':Y':Z':W') = (a_{ij})\, {}^t(X:Y:Z:W).
	\]
	Since $\pi_l \circ \sigma = \pi_l$, we have $(X:Y) = (X':Y')$ for every point $(X:Y:Z:W) \in C$. Hence,
	\[
		XY' - YX'
		= X(a_{21}X + a_{22}Y + a_{23}Z + a_{24}W) - Y(a_{11}X + a_{12}Y + a_{13}Z + a_{14}W) = 0
	\]
	holds on $C$.
	Since $I(C) = (Q, F)$, where $Q$ is a quadratic form and $F$ is a cubic form, it follows that
	\[
		X(a_{21}X + a_{22}Y + a_{23}Z + a_{24}W)- Y(a_{11}X + a_{12}Y + a_{13}Z + a_{14}W) = \mu Q
	\]
	for some $\mu \in k$. If $\mu\neq 0$, then $Q$ is linear in $Z$ and $W$. Hence for a general $P\in\mathbb P^1$, the fiber $\pi_l^{-1}(P)$, defined by $Q=F=0$, has at most $3$ points, contradicting $\deg\pi_l\ge 4$. Therefore, $\mu = 0$, and the above polynomial vanishes identically. Comparing coefficients, we obtain $a_{11} = a_{22}$ and $a_{ij} = 0$ for $i \neq j$ with $i \in \{1,2\}$ and $j \in \{1,2,3,4\}$. This shows that $\sigma$ is represented by a matrix of the stated form.
\end{proof}

On $S_3$-lines, we have the following: 

\begin{proposition}[The proof of Theorem 4.5 in \cite{Yoshihara2006}, \cite{KomedaTakahashi2022}] \label{prop:generator-of-Gal-S_3-line}
	Let $l$ be an $S_3$-line for $C$. Then, by taking a suitable projective transformation, we have that $l:X=Y=0$, and $G_l$ is generated by the following two elements: 
	\begin{equation} \label{eq:representation-of-S_3}
		\sigma:=\begin{pmatrix}
			1 & 0 & 0 & 0 \\
			0 & 1 & 0 & 0 \\
			0 & 0 & \omega & 0 \\
			0 & 0 & 0 & \omega^2 \\
			\end{pmatrix} \text{ and } 
		\tau:=\begin{pmatrix}
			1 & 0 & 0 & 0 \\
			0 & 1 & 0 & 0 \\
			0 & 0 & 0 & 1 \\
			0 & 0 & 1 & 0 \\
			\end{pmatrix}
	,
	\end{equation}
	where $\omega$ is a primitive cubic root of unity. 
\end{proposition}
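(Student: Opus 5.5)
The idea is to reduce the statement to the representation theory of $S_3$ in characteristic~$0$, using Lemma~\ref{lemma:representation-when-l:X=Y=0} to linearize the action.

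\textbf{Step 1: normalize the line and the matrices.} Since $l$ is Galois with group $G_l \cong S_3$, we have $\deg \pi_l = |G_l| = 6 \geq 4$. After a projective transformation we may assume $l : X=Y=0$. Each $\sigma \in G_l$ extends to a unique projective transformation of $\mathbb{P}^3$, so it is given by a matrix that is unique up to a scalar. Lemma~\ref{lemma:representation-when-l:X=Y=0} shows that some representative has the form \eqref{eq:matrix-when-l:X=Y=0}, with upper-left $2\times2$ block the identity. This condition fixes the scalar, so we obtain a well-defined map
\[
	\rho : G_l \to GL_4(k), \qquad \sigma \mapsto M_\sigma .
\]
Because the set of matrices of the form \eqref{eq:matrix-when-l:X=Y=0} is closed under multiplication, $\rho$ is a genuine homomorphism. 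It is injective, since a nontrivial automorphism of $C$ cannot be the identity transformation of $\mathbb{P}^3$.

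\textbf{Step 2: split off the trivial part by Maschke.} Let the matrices act on column vectors in $V = k^4$. The subspace $U = \langle e_3, e_4\rangle$ is $G_l$-invariant, because the first two rows of every $M_\sigma$ are $(1,0,0,0)$ and $(0,1,0,0)$. The quotient $V/U$ is the trivial $2$-dimensional representation. Since $\operatorname{char} k = 0$, Maschke's theorem gives a $G_l$-invariant complement $U'$, on which $G_l$ acts trivially.

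Choose new basis vectors $e_1', e_2' \in U'$ lifting $e_1, e_2$ modulo $U$. The corresponding change of coordinates replaces $Z, W$ by $Z + (\text{linear in } X,Y)$ and $W + (\text{linear in } X,Y)$, and leaves $X$ and $Y$ unchanged. So the line $l : X=Y=0$ is preserved, and every $M_\sigma$ becomes block diagonal $\operatorname{diag}(I_2, A_\sigma)$. The map $\sigma \mapsto A_\sigma$ is then a faithful $2$-dimensional representation of $S_3$.

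\textbf{Step 3: identify the representation.} A faithful $2$-dimensional representation of $S_3$ cannot be a sum of two characters, since its image would then be abelian. Hence it is the irreducible standard representation. Take the order-$3$ element of $G_l$ and diagonalize it in $U$ with eigenvalues $\omega, \omega^2$; both must occur by faithfulness and irreducibility. This gives eigenvectors $v_3, v_4$.

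An involution $t$ in $G_l$ conjugates the order-$3$ element to its inverse, so it swaps the two eigenlines: $t v_3 = c\, v_4$. Since $t^2 = 1$, we get $t v_4 = c^{-1} v_3$. Replacing $v_4$ by $c\, v_4$ makes $t$ act as the swap matrix. In the final coordinates the generators are exactly $\sigma$ and $\tau$ of \eqref{eq:representation-of-S_3}.

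\textbf{Main obstacle.} The only delicate point is Step 1: we must check that normalizing the scalar via the lemma really yields a linear (not merely projective) representation. Once that holds, Steps 2 and 3 are standard.
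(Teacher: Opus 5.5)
Your proof is correct. The paper does not prove this statement itself; it cites Yoshihara and Komeda--Takahashi. Your argument is, however, essentially the one the paper gives for the $K_4$ analogue, Proposition~\ref{prop:generator-of-Gal-K4-line}: Lemma~\ref{lemma:representation-when-l:X=Y=0} puts every element of $G_l$ in the form \eqref{eq:matrix-when-l:X=Y=0}, a unipotent change of $Z,W$ by linear forms in $X,Y$ removes the lower-left $2\times 2$ blocks, and the lower-right blocks are then normalized. Your appeal to Maschke's theorem and to the standard $2$-dimensional representation of $S_3$ does the work of the paper's explicit conjugating matrices $P$ and $R$.
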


For $\sigma$ and $\tau$ in Proposition~\ref{prop:generator-of-Gal-S_3-line}, the fixed locus $\Fix(\sigma)$ consists of the line $Z=W=0$ and the two points $(0:0:1:0)$ and $(0:0:0:1)$, and the $S_3$-line $l$ passes through these two points. The fixed locus $\Fix(\tau)$ consists of the hyperplane $Z-W=0$ and the point $(0:0:-1:1)$, and the line $l$ passes through this point. Hence, we have the following:
	
\begin{proposition} \label{prop:distinct_l_induce_distinct_G_l}
	Let $l$ and $l'$ be $S_3$-lines for $C$, and let $\sigma$ and $\sigma'$ be elements of $G_l$ and $G_{l'}$, respectively, with $\operatorname{ord}(\sigma)=\operatorname{ord}(\sigma')=3$. If $l \neq l'$, then $\langle \sigma \rangle \neq \langle \sigma' \rangle$; in particular, $G_l \neq G_{l'}$. 
\end{proposition}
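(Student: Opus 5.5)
The plan is to show that an order-$3$ element of $G_l$ determines $l$ uniquely. Once that holds, $\langle\sigma\rangle=\langle\sigma'\rangle$ forces $l=l'$, which is the contrapositive of the claim.

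Suppose $\langle \sigma \rangle = \langle \sigma' \rangle$. I will show that $l = l'$. Since $\sigma$ has order $3$ in $G_l\cong S_3$, Proposition~\ref{prop:generator-of-Gal-S_3-line} lets us choose coordinates with $l:X=Y=0$. In these coordinates $\sigma$ is, up to replacing it by its inverse, the diagonal matrix $\mathrm{diag}(1,1,\omega,\omega^2)$. This is because $G_l$ has a unique subgroup of order $3$. As an automorphism of $C$, $\sigma$ extends to a unique projective transformation of $\mathbb{P}^3$, so it is the same projective transformation whether it is viewed in $G_l$ or in $G_{l'}$. The only possible inversion, $\sigma'=\sigma^{\pm1}$, does not change the fixed locus.

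Next I read off the fixed locus, as in the remark after Proposition~\ref{prop:generator-of-Gal-S_3-line}. The eigenvalues are $1,1,\omega,\omega^2$, so $\Fix(\sigma)$ is the line $Z=W=0$ together with the two isolated points $P_1=(0:0:1:0)$ and $P_2=(0:0:0:1)$, and the line through $P_1$ and $P_2$ is exactly $l$. This is an intrinsic description: $l$ is the line joining the two isolated fixed points of the projective transformation $\sigma$, namely the points coming from the eigenvalues of multiplicity one.

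Now apply the same reasoning to $l'$ in its own adapted coordinates. It shows that $l'$ is the line joining the two isolated fixed points of $\sigma'$. Since $\sigma'=\sigma^{\pm1}$ has the same fixed locus as $\sigma$, its two isolated fixed points are $P_1$ and $P_2$, so $l'=\overline{P_1P_2}=l$.

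The only step needing care is that the description of $l$ is coordinate-free. The fixed locus is a property of the projective transformation itself and does not depend on coordinates. Also, the isolated points are distinguished from the fixed line, since the fixed line has dimension one. The statement that $G_l\neq G_{l'}$ follows at once: if $G_l=G_{l'}$, then both groups have the same unique subgroup of order $3$, which the first part rules out.
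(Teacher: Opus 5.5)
Your proof is correct and is essentially the paper's argument. The paper observes that, in the normal form of Proposition~\ref{prop:generator-of-Gal-S_3-line}, $l$ is the line through the two isolated fixed points $(0:0:1:0)$ and $(0:0:0:1)$ of the order-$3$ element, so $l$ is determined by $\langle\sigma\rangle$. You make explicit what the paper leaves implicit: the uniqueness of the order-$3$ subgroup of $S_3$, the invariance of $\Fix$ under $\sigma\mapsto\sigma^{-1}$, and the fact that the fixed locus does not depend on coordinates.
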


On $K_4$-lines, we have the following: 

\begin{proposition} 
	\label{prop:generator-of-Gal-K4-line}
	Let $l$ be a $K_4$-line for $C$. Then, by taking a suitable projective transformation, $l:X=Y=0$, and all elements of $G_l$ are 
	\begin{equation} \label{eq:standard-representation-K4}
		I, 
		\begin{pmatrix}
			1 & 0 & 0 & 0 \\
			0 & 1 & 0 & 0 \\
			0 & 0 & -1 & 0 \\
			0 & 0 & 0 & 1 \\
		\end{pmatrix},  
		\begin{pmatrix}
			1 & 0 & 0 & 0 \\
			0 & 1 & 0 & 0 \\
			0 & 0 & 1 & 0 \\
			0 & 0 & 0 & -1 \\
		\end{pmatrix}, \text{ and }
		\begin{pmatrix}
			1 & 0 & 0 & 0 \\
			0 & 1 & 0 & 0 \\
			0 & 0 & -1 & 0 \\
			0 & 0 & 0 & -1 \\
		\end{pmatrix}. 
	\end{equation}
\end{proposition}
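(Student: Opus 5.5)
The plan is to reduce to the case $l:X=Y=0$ with $\deg\pi_l=4$. Then Lemma~\ref{lemma:representation-when-l:X=Y=0} gives the block form (\ref{eq:matrix-when-l:X=Y=0}) for $G_l$. After that, diagonalize the resulting representation of $K_4$ on the $(Z,W)$-part.

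First, since $l$ is a $K_4$-line, $\deg \pi_l = |G_l| = 4$. Choose coordinates with $l:X=Y=0$. By Lemma~\ref{lemma:representation-when-l:X=Y=0}, each $g\in G_l$ is represented, up to scalar, by a matrix
\[
\begin{pmatrix} I_2 & 0\\ B_g & A_g\end{pmatrix}.
\]
This normalization (upper-left block equal to $I_2$) fixes the scalar uniquely. Consequently $g\mapsto$ this matrix is a genuine group homomorphism $K_4\to GL_4(k)$, not merely a projective one. It is injective because $\Aut(C)$ acts faithfully on $\mathbb{P}^3$, the canonical embedding being nondegenerate.

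Second, in characteristic $0$ this is a representation of a finite abelian group, so it is completely reducible and diagonalizable. The subspace spanned by the $Z,W$ coordinate functionals is not invariant in the dual action, but the subspace $V_0=\{X=Y=0\}$ of vectors (the line $l$ itself) is invariant. Each $g$ acts on $V_0$ by $A_g$ and acts trivially on the quotient $k^4/V_0$. By Maschke, pick a $G_l$-invariant complement $V_1$ to $V_0$. Then $G_l$ acts trivially on $V_1$, since $V_1\cong k^4/V_0$ equivariantly. Changing coordinates so that $V_1$ becomes the $X,Y$-plane kills the $B_g$ blocks. This change keeps $l:X=Y=0$ and keeps $\pi_l$ as $(X:Y)$ up to an automorphism of $\mathbb{P}^1$.

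Third, the $A_g$ form a faithful $2$-dimensional representation of $K_4$. Faithfulness holds because a trivial $A_g$ would make $g$ act trivially on $\mathbb{P}^3$. The $A_g$ commute and have order dividing $2$, so we simultaneously diagonalize them with eigenvalues $\pm1$ by a change of the $Z,W$ coordinates. The diagonal subgroup of $\{\pm1\}^2$ has order $4$, so faithfulness forces the image to be all of $\{\operatorname{diag}(\pm1,\pm1)\}$. This gives exactly the four matrices (\ref{eq:standard-representation-K4}).

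The main subtlety I expect is the second step: making sure the scalar normalization really yields a linear (not projective) representation, and that the invariant complement argument clears the lower-left block. Once the $\deg\pi_l=4$ hypothesis lets us invoke Lemma~\ref{lemma:representation-when-l:X=Y=0}, the rest is standard linear algebra.
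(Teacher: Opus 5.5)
Your proposal is correct and takes essentially the same route as the paper. Both arguments use Lemma~\ref{lemma:representation-when-l:X=Y=0} to normalize each element to a block lower-triangular matrix with upper-left block $I$, so the group relations hold as genuine matrix identities; both then simultaneously diagonalize the lower-right $2\times 2$ blocks and clear the lower-left blocks by a coordinate change of the form $\begin{pmatrix} I & O\\ * & I\end{pmatrix}$, which preserves $X$ and $Y$. The only difference is order and packaging: you clear the lower-left blocks first using Maschke's invariant complement, whereas the paper diagonalizes first and then clears them with the explicit matrix $R$, which amounts to carrying out that averaging by hand.
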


\begin{proof}	
	By taking a suitable projective transformation, we may assume that $l:X=Y=0$ and $\pi_l:(X:Y:Z:W) \mapsto (X:Y)$. 	
	Let $\tau_1$ and $\tau_2$ be generators of $G_l \cong K_4$. Namely, $G_l = \{e, \tau_1, \tau_2, \tau_1 \tau_2\}$, where $e$ is the unit, $\tau_1^2 = \tau_2^2 = e$, and $\tau_1 \tau_2 = \tau_2 \tau_1$. Since $\pi_l \circ \tau_i = \pi_l$, by Lemma~\ref{lemma:representation-when-l:X=Y=0}, we have that $\tau_i \in PGL(4,k)$ ($i=1,2$) is represented by a matrix~\eqref{eq:matrix-when-l:X=Y=0}.
	Let $A_i$ and $B_i$ ($i=1,2$) be the matrices of size $2 \times 2$ such that  
	\[	\tau_i = \begin{pmatrix}
			I & O \\
			A_i & B_i \\
		\end{pmatrix},  \]
	where $I$ and $O$ are the identity matrix and the zero matrix of size $2 \times 2$, respectively. Since $\tau_i^2 =e$ and $\tau_1 \tau_2 = \tau_2 \tau_1$, we have that $A_i + B_i A_i = O$, $B_i^2 = I$, $A_1 + B_1 A_2 = A_2 + B_2 A_1$, and $B_1 B_2 = B_2 B_1$. Since $B_i^2 =I$ and $B_1 B_2 = B_2 B_1$, the matrices $B_1$ and $B_2$ are simultaneously diagonalizable. Let 
	\[
		Q^{-1} B_i Q = \begin{pmatrix}	 a_i & 0 \\ 0 & b_i \end{pmatrix} \, (i=1,2)
	\] 
	be the diagonalizations, where $Q$ is an invertible  matrix of size $2 \times 2$, and $a_i, b_i \in k$. We set 
	\[
		P = \begin{pmatrix}	I & O \\ O & Q	\end{pmatrix}. 
	\]
	Then, $P$ is an invertible matrix of size $4 \times 4$ and
	\begin{equation*}
		P^{-1} \tau_i P =
		\begin{pNiceMatrix}
			\Block{1-1}{I} & \Block{1-2}{O} \\
			\Block{2-1}{Q^{-1} A_i} & a_i & 0 \\
             & 0   & b_i
		\end{pNiceMatrix}.
	\end{equation*}
	Let $C_i$ denote $Q^{-1}A_i$. Since $\tau_i^2 = e$ and $\tau_1 \tau_2 = \tau_2 \tau_1$, we have that $a_i^2 = 1$, $b_i^2=1$, 
	\begin{equation} \label{eq:C_i}
		C_i + 
		\begin{pmatrix} 
			a_i & 0  \\
			0 & b_i  
		\end{pmatrix} 
		C_i =O
	\end{equation} 
	and 
	\begin{equation} \label{eq:C_3}
		C_1 + 
		\begin{pmatrix} 
			a_1 & 0  \\
			0 & b_1  
		\end{pmatrix} 
		C_2 = C_2 + 
		\begin{pmatrix} 
			a_2 & 0  \\
			0 & b_2  \\
		\end{pmatrix} 
		C_1.
	\end{equation}
	Let $C_3$ denote the matrix of the left-hand side (or the right-hand side) of Equation~\eqref{eq:C_3}. Then, 
	\[
		P^{-1}\tau_1 \tau_2 P = 
		\begin{pNiceMatrix}
			\Block{1-1}{I} & \Block{1-2}{O} \\
			\Block{2-1}{C_3} & a_1a_2 & 0 \\
                       & 0   & b_1b_2
		\end{pNiceMatrix}.
	\]	
	Since $P^{-1}\tau_1P$, $P^{-1}\tau_2P$, and $P^{-1}\tau_1\tau_2P$ are mutually distinct and none of them is the identity matrix, we see from \eqref{eq:C_i} and \eqref{eq:C_3} that the matrices
	\[
		\begin{pmatrix}
			a_1 & 0 \\
			0 & b_1
		\end{pmatrix}, 
		\begin{pmatrix}
			a_2 & 0 \\
			0 & b_2
		\end{pmatrix}, \text{and}
		\begin{pmatrix}
			a_1a_2 & 0 \\
			0 & b_1b_2
		\end{pmatrix}
	\]
	are mutually distinct and none of them is the identity matrix. Since $a_i^2 = 1$ and $b_i^2=1$, we may assume that 
   \[
		\begin{pmatrix}
			a_1 & 0 \\
			0 & b_1
		\end{pmatrix} = 
		\begin{pmatrix}
			1 & 0 \\
			0 & -1
		\end{pmatrix}, 
		\begin{pmatrix}
			a_2 & 0 \\
			0 & b_2
		\end{pmatrix} = 
		\begin{pmatrix}
			-1 & 0 \\
			0 & 1
		\end{pmatrix}, \text{and}
		\begin{pmatrix}
			a_1a_2 & 0 \\
			0 & b_1b_2
		\end{pmatrix} = 
		\begin{pmatrix}
			-1 & 0 \\
			0 & -1
		\end{pmatrix}. 
   \]
	By Equation~\eqref{eq:C_i}, 
	$C_1 = \begin{pmatrix}
		0 & 0 \\
		* & *
	\end{pmatrix}$ and 
	$C_2 = \begin{pmatrix}
		* & * \\
		0 & 0
	\end{pmatrix}$. Let us denote 
	\[
		C_1 = \begin{pmatrix}
		0 & 0 \\
		c & d
	\end{pmatrix} \text{ and }
	C_2 = \begin{pmatrix}
		e & f \\
		0 & 0
	\end{pmatrix}.
	\]
	Set 
	\[	
		R :=	\begin{pmatrix}
				1 & 0 & 0 & 0 \\
				0 & 1 & 0 & 0 \\
				e/2 & f/2 & 1 & 0 \\
				c/2 & d/2 & 0 & 1
			\end{pmatrix}. 
	\]
	Then, we have that 
	\[
		(PR)^{-1} \tau_1 (PR) = 	\begin{pmatrix}
				1 & 0 & 0 & 0 \\
				0 & 1 & 0 & 0 \\
				0 & 0 & -1 & 0 \\
				0 & 0 & 0 & 1
			\end{pmatrix}, \text{ and }
		(PR)^{-1} \tau_2 (PR) = 	\begin{pmatrix}
				1 & 0 & 0 & 0 \\
				0 & 1 & 0 & 0 \\
				0 & 0 & 1 & 0 \\
				0 & 0 & 0 & -1
			\end{pmatrix}. 
	\]
	Note that the defining equations $X=Y=0$ of the line $l$ remain unchanged under the projective transformations $P$ and $R$. 	
\end{proof}

For a $K_4$-line $l$ for $C$, there exists an element of $G_l$ whose trace is equal to $0$. 
\begin{lemma} \label{lemma:fixed-points-of-trace-0-involution}
	Let $l$ be a $K_4$-line and $\sigma \in G_l$ be an element with trace $0$. The set $\Fix (\sigma) := \{ P \in \mathbb{P}^3 \mid \sigma(P) =P \}$ is the union of two lines: one is the $K_4$-line $l$, and the other is one that does not intersect the curve $C$. In particular, $\sigma$ has exactly two fixed points on $C$.
\end{lemma}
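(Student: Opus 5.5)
We work in the normal form of Proposition~\ref{prop:generator-of-Gal-K4-line}: $l: X=Y=0$, and the element of $G_l$ with trace $0$ is $\sigma=\operatorname{diag}(1,1,-1,-1)$. This is the only trace-zero element among the four matrices in \eqref{eq:standard-representation-K4}. Trace is only defined up to scalar in $PGL(4,k)$, but the condition "trace $0$" is scalar-invariant, so we may use this representative. Its eigenspaces are $\{Z=W=0\}$ for eigenvalue $1$ and $\{X=Y=0\}=l$ for eigenvalue $-1$. Hence $\Fix(\sigma)=l\cup m$ with $m:Z=W=0$. It remains to prove that $m\cap C=\emptyset$ and that $\sigma$ has exactly two fixed points on $C$, i.e.\ $\#(C\cap l)=2$.

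\textbf{Step 1: the equations of $C$.} Since $\sigma$ preserves $C$, it preserves the unique quadric $Q$ and the ideal $(Q,F)$. Hence $Q$ is an eigenvector, so $Q(X,Y,-Z,-W)=\pm Q$. The same holds for the other two involutions $\operatorname{diag}(1,1,-1,1)$ and $\operatorname{diag}(1,1,1,-1)$. We may also take $F$ to be an eigenvector: average $F$ modulo $(Q)\cdot(\text{linear forms})$, which the group preserves, using that the group is finite and $\operatorname{char}k=0$.

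For $Q$, being semi-invariant under both coordinate sign changes means $Q$ contains only monomials whose $Z$-parity and $W$-parity are fixed. The possible forms are:
\begin{itemize}
\item $q(X,Y)+\alpha Z^2+\beta W^2$;
\item $Z\,\ell(X,Y)$, or $W\,\ell(X,Y)$, or $ZW$.
\end{itemize}
The forms in the second item have rank $\le 2$ or lead to degenerate cases. If $Q=ZW$ or $Q=Z\ell$, then $Q$ is reducible, contradicting the irreducibility of $C$ (a canonical curve lies on an irreducible quadric of rank $\ge3$). So $Q=q(X,Y)+\alpha Z^2+\beta W^2$.

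Similarly, $F$ has fixed parities in $Z$ and in $W$. Combined with $\deg\pi_l=4$, which holds because for a $K_4$-line the degree of $\pi_l$ is $|G_l|=4$, this should force
\[
F=c(X,Y)+Z^2a(X,Y)+W^2b(X,Y),
\]
with $a,b$ linear, after reducing modulo $Q$. The alternative parity classes, such as $F=Z\cdot(\cdots)$ or $F=ZW\,\ell$, either make $C$ reducible or contradict $\deg C=6$, $\deg\pi_l=4$.

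\textbf{Step 2: $m\cap C$ and $l\cap C$.}
\begin{itemize}
\item On $m$ we have $Z=W=0$, and $C\cap m$ is given by $q(X,Y)=c(X,Y)=0$. If these had a common root, then $C$ would meet $m$.
\item On $l$ we have $X=Y=0$, and $C\cap l$ is given by $\alpha Z^2+\beta W^2=0$, with $F$ vanishing automatically there. This gives exactly $2$ points if $\alpha\beta\neq0$.
\end{itemize}
Because $\deg\pi_l=4=6-\#(C\cap l)$ counted with multiplicity, $l$ meets $C$ in degree $2$. Nondegeneracy, i.e.\ smoothness of $C$ at those points, should force two distinct points, so $\alpha\beta\ne0$.

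For $m$: suppose $P\in m\cap C$. Then $P$ is a fixed point of all of $G_l$, since each diagonal matrix fixes $m$ pointwise. So $\pi_l$ would be totally ramified at $P$ with index $4$, meaning the stabilizer of $P$ in $G_l$ is all of $K_4$. But stabilizers of points on a curve in characteristic $0$ are cyclic, and $K_4$ is not cyclic. This gives the contradiction, so $m\cap C=\emptyset$.

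\textbf{Conclusion and main obstacle.} The fixed points of $\sigma$ on $C$ are therefore exactly $C\cap l$. We still must check there are two of them. Each $P\in C\cap l$ is fixed by $\sigma$, and the count follows from the equation on $l$ once $\alpha\beta\ne0$. Alternatively, apply Riemann–Hurwitz to $C\to C/\langle\sigma\rangle$: with $g(C)=4$ and $r$ fixed points, $6=2(2g'-2)+r$, so $r\in\{2,6,10\}$. Combined with $r=\#(C\cap l)\le2$, this gives $r=2$.

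The main obstacle is justifying the normal form of $Q$, in particular excluding the degenerate parity classes. The cleanest route is probably to avoid it altogether: combine the cyclic-stabilizer argument for $m$ with Riemann–Hurwitz and the bound $\#(C\cap l)\le 2$, which gives $r=2$ directly.
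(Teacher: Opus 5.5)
Your argument is correct if you follow the route you single out at the end, and it is a reordering of the paper's ingredients rather than the same proof.

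\textbf{Your argument.} In the normal form, $\Fix(\sigma)=l\cup m$ with $m\colon Z=W=0$. Every element of $G_l$ fixes $m$ pointwise, so a point of $C\cap m$ would have stabilizer $K_4$. This is impossible, since point stabilizers are cyclic in characteristic $0$. Hence the fixed points of $\sigma$ on $C$ are exactly $C\cap l$, so $r=\#(C\cap l)\le 6-\deg\pi_l=2$. Riemann--Hurwitz for $C\to C/\langle\sigma\rangle$ gives $r=10-4g'\in\{2,6,10\}$, which forces $r=2$.

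\textbf{What to drop.} Remove Step~1 and the claim that smoothness forces $\alpha\beta\neq0$. The parity analysis of $F$ is left at ``should force''. A single point on $l$ is excluded not by smoothness but by the stabilizer argument, or simply by $r\neq 1$. None of this is needed.

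\textbf{Comparison with the paper.} The paper uses the same tools in a different order:
\begin{enumerate}
\item It applies Riemann--Hurwitz to $\pi_l$ to get $14$ ramification points of index $2$, each fixed by exactly one of the three involutions.
\item It uses non-hyperellipticity to get $g(C/\langle\sigma\rangle)\in\{1,2\}$, i.e.\ $r\in\{2,6\}$.
\item It bounds $r\le\#(C\cap l)+\#(C\cap m)\le 2+3$ to conclude $r=2$.
\item Only then does it use the absence of totally ramified points of $\pi_l$ to rule out $\#(C\cap l)=1$, obtaining $C\cap m=\emptyset$ as the last step.
\end{enumerate}

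Your version applies the non-cyclic-stabilizer argument directly to $m$, where it is immediate. It then lets the congruence $r\equiv 2\pmod 4$ absorb both the hyperelliptic case and the case $\#(C\cap l)=1$. So you need neither the bound $\#(C\cap m)\le 3$ nor non-hyperellipticity.

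What the paper's bookkeeping gives in return is information about the other two involutions. Since $\sigma$ accounts for only $2$ of the $14$ ramification points, and no quotient has genus $0$, each nonzero-trace involution has $6$ fixed points and a genus-$1$ quotient. The paper cites this from the present proof later, in the rank-$3$ $K_4$-line case. Your argument would need that extra count, plus non-hyperellipticity, to recover it.
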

\begin{proof}
	By taking a suitable projective transformation, we may assume that the $K_4$-line $l$ is given by $X = Y = 0$, that the group $G_l$ is represented by the matrices given in \eqref{eq:standard-representation-K4}, and that
	\[
		\sigma =
			\begin{pmatrix}
				1 & 0 & 0 & 0 \\
				0 & 1 & 0 & 0 \\
				0 & 0 & -1 & 0 \\
				0 & 0 & 0 & -1
			\end{pmatrix}.
	\]
	Let $\tau_1$ and $\tau_2$ be the involutions in $G_l$ distinct from $\sigma$.

	Since $\pi_l : C \to \mathbb{P}^1$ is a non-cyclic Galois morphism of degree $4$, no ramification point of $\pi_l$ is totally ramified. Applying the Riemann--Hurwitz formula, we see that $\pi_l$ has exactly $14$ ramification points and that all ramification indices are equal to $2$. Moreover, each ramification point is a fixed point of exactly one of
$\tau_1$, $\tau_2$, and $\sigma$.

	Applying the Riemann--Hurwitz formula again, the genera of the quotient curves $C/\langle \tau_1 \rangle$, $C/\langle \tau_2 \rangle$, and $C/\langle \sigma \rangle$ are equal to $1$ or $2$. If the genus of $C/\langle \sigma \rangle$ is equal to $1$ (resp.\ $2$), then the involution $\sigma$ has exactly $6$ (resp.\ $2$) fixed points on $C$.

	From the explicit form of the matrix representing $\sigma$, we see that the fixed locus $\Fix(\sigma)$ in $\mathbb{P}^3$ is the union of the two lines $l : X=Y=0$ and $l' : Z=W=0$. Since $l$ is a $K_4$-line, we have $\deg \pi_l = 4$, and therefore $\# (C \cap l) \le 2$. On the other hand, since $C$ is not hyperelliptic, we have $\#(C \cap l') \le 3$. It follows that $\sigma$ cannot have $6$ fixed points on $C$, and hence $\sigma$ has exactly $2$ fixed points and the genus of $C/\langle \sigma \rangle$ is equal to $2$.

	Finally, assume that $\#(C \cap l) = 1$, and let $P$ be the unique point in $C \cap l$. Since $\tau_1$, $\tau_2$, and $\sigma$ are automorphisms of $C$ preserving the line $l$, the point $P$ would be a totally ramified point of $\pi_l$, which contradicts the fact that $\pi_l$ is a non-cyclic Galois morphism. Therefore, we have $\#(C \cap l) = 2$ and $C \cap l' = \emptyset$.
\end{proof}

By Lemma~\ref{lemma:fixed-points-of-trace-0-involution}, we obtain the following. 	
\begin{proposition} \label{prop:two-G_ls-belonging-to-K_4-lines}
	Let $l$ and $l'$ be $K_4$-lines for $C$. Let $\sigma$ and $\sigma'$ be elements in $G_l$ and $G_{l'}$ with traces zero, respectively. If $l \ne l'$ then $\sigma \ne \sigma'$, in particular, $G_l \ne G_{l'}$. 
\end{proposition}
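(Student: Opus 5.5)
The plan is to read off the fixed locus of a trace-zero element from Lemma~\ref{lemma:fixed-points-of-trace-0-involution}, and use it to recover the $K_4$-line from the automorphism alone.

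First I would show that $\sigma$ determines $l$. By Lemma~\ref{lemma:fixed-points-of-trace-0-involution}, the fixed locus $\Fix(\sigma)\subset\mathbb{P}^3$ of the unique projective transformation extending $\sigma$ is a union of two disjoint lines. One of them is $l$, which meets $C$ in exactly two points. The other does not meet $C$ at all.

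Now suppose $\sigma=\sigma'$ as elements of $\Aut(C)$. By the remark in Section~\ref{section:introduction}, the projective transformations of $\mathbb{P}^3$ extending $\sigma$ and $\sigma'$ coincide, so $\Fix(\sigma)=\Fix(\sigma')$. Applying Lemma~\ref{lemma:fixed-points-of-trace-0-involution} to $l'$ as well, this common fixed locus is the union of $l'$ and a line disjoint from $C$.

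The key observation is that within $\Fix(\sigma)$, exactly one of the two lines meets $C$. That line is $l$, and by the same reasoning it is also $l'$. Hence $l=l'$, and contrapositively $l\neq l'$ implies $\sigma\neq\sigma'$.

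For the final clause, suppose $G_l=G_{l'}$. By Proposition~\ref{prop:generator-of-Gal-K4-line}, $G_l$ contains exactly one non-identity element of trace zero, namely $\operatorname{diag}(1,1,-1,-1)$. The other two involutions have trace $2$. The trace is well defined up to a scalar; choosing the representative that squares to $I$, it is defined up to sign, so "trace zero" is intrinsic. Therefore $\sigma$ is the unique trace-zero element of $G_l$, and likewise $\sigma'$ is the unique trace-zero element of $G_{l'}=G_l$. This forces $\sigma=\sigma'$, contradicting the first part, so $G_l\neq G_{l'}$.

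I expect the only delicate point to be this intrinsic meaning of "trace zero", i.e.\ that the condition does not depend on the chosen matrix representative. Normalizing the representative to square to $I$ disposes of it.
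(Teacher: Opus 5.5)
Your proof is correct and follows the paper's route. The paper obtains this proposition directly from Lemma~\ref{lemma:fixed-points-of-trace-0-involution}: $l$ is recovered as the unique component of $\Fix(\sigma)$ that meets $C$, exactly as you argue. Your treatment of the final clause spells out what the paper leaves implicit: you use that the standard form of Proposition~\ref{prop:generator-of-Gal-K4-line} has a unique trace-zero element, and that normalizing to $A^2=I$ makes ``trace zero'' intrinsic.
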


\begin{remark} \label{rem:obtain_l_from_G_l=K4}
	By Proposition~\ref{prop:generator-of-Gal-K4-line}, we have the following. Let $l$ be a $K_4$-line and let $\tau_1, \tau_2 \in G_l$ be distinct elements of order~$2$ with nonzero trace. Each of the fixed loci $\Fix(\tau_1)$ and $\Fix(\tau_2)$ is the union of a plane and a point $P_i$ ($i=1,2$). The line passing through the two points $P_1$ and $P_2$ is precisely the $K_4$-line $l$.
\end{remark}

%%%%%%%%%%%%%%%%%%%%%%%%%%%%%%%%%%%%%%%%%%%%%%%%%%%%
\section{Proof of Theorem~\ref{theorem:main}}\label{section:proof} 

In this section, we study the numbers of $S_3$-lines and $K_4$-lines by dividing the argument into several cases. By combining the results obtained in each case, we prove Theorem~\ref{theorem:main}. We note that some intermediate bounds obtained in this section are not sharp, although Theorem~\ref{theorem:main} itself gives sharp upper bounds.

The following lemma is immediate from the definition of equivalence of pencils.

\begin{lemma}\label{lemma:action-on-trigonal}
	Let $f:C\to\mathbb P^1$ be a pencil.
	If $\sigma\in\Aut(C)$ satisfies $f\circ\sigma\simeq f$, then there exists a unique $T\in\Aut(\mathbb P^1)$ such that $f\circ\sigma = T\circ f$. 
	\[
		\xymatrix{
  			C \ar[r]^{\sigma} \ar[d]_{f}^{\hspace{3.3ex} \circlearrowleft} & C \ar[d]^{f} \\
  			\mathbb{P}^1 \ar[r]^{T} & \mathbb{P}^1
		}
	\]
	Hence any subgroup $G\subset\Aut(C)$ preserving $f$ induces a homomorphism
	\begin{equation}\label{eq:fundamental-action}
		\varphi:G\longrightarrow\Aut(\mathbb P^1) \qquad \sigma \longmapsto T,
	\end{equation}
	and we have a short exact sequence
	\begin{equation}\label{eq:fundamental-exact-seq}
		1 \longrightarrow \Ker\varphi \longrightarrow G \xrightarrow{\ \varphi\ } \img\varphi \longrightarrow 1.
	\end{equation}
\end{lemma}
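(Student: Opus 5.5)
The plan is to unwind the definition of equivalence of pencils and then check that the assignment $\sigma\mapsto T$ is compatible with composition.

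\emph{Existence and uniqueness of $T$.} By hypothesis $f\circ\sigma\simeq f$, so by the definition of equivalence in Section~\ref{section:preliminaries} there is some $T\in\Aut(\mathbb P^1)$ with $f\circ\sigma=T\circ f$. For uniqueness, suppose $T\circ f=T'\circ f$. Since $f$ is a nonconstant morphism from an irreducible curve, it is surjective onto $\mathbb P^1$. Hence $T$ and $T'$ agree on every point of $\mathbb P^1$, and so $T=T'$.

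\emph{$\varphi$ is a homomorphism.} Let $G$ preserve $f$, meaning $f\circ\sigma\simeq f$ for every $\sigma\in G$. For each $\sigma\in G$ put $\varphi(\sigma)=T_\sigma$, the unique element with $f\circ\sigma=T_\sigma\circ f$. For $\sigma,\sigma'\in G$ we compute
\[
f\circ(\sigma\sigma')=(f\circ\sigma)\circ\sigma'=T_\sigma\circ f\circ\sigma'=T_\sigma T_{\sigma'}\circ f.
\]
By uniqueness this gives $T_{\sigma\sigma'}=T_\sigma T_{\sigma'}$. Taking $\sigma=e$ gives $T_e=\id$, again by uniqueness.

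\emph{The exact sequence.} Every group homomorphism $\varphi\colon G\to\img\varphi$ yields the short exact sequence~\eqref{eq:fundamental-exact-seq}: the inclusion of $\Ker\varphi$ into $G$ is injective, $\varphi$ is surjective onto its image, and the kernel of $\varphi$ is the image of the inclusion.

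The only point needing any care is the surjectivity of $f$, which is what gives uniqueness and hence well-definedness. It holds because a pencil is nonconstant by definition, so I do not expect a genuine obstacle.
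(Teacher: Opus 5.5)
Your argument is correct and is exactly the unwinding of definitions the paper has in mind; the paper gives no separate proof and only remarks that the lemma is immediate from the definition of equivalence of pencils. Your uniqueness step (surjectivity of the nonconstant morphism $f$ from the projective curve $C$) and the composition check $f\circ(\sigma\sigma')=T_\sigma T_{\sigma'}\circ f$ supply precisely the details that remark leaves implicit.
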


Our proof proceeds as follows. To the set of Galois lines (either $S_3$-lines or $K_4$-lines), we associate a suitable finite subgroup $G$ of $\Aut(C)$. For a trigonal morphism $f=g^1_3$ or $h^1_3$, the induced action~\eqref{eq:fundamental-action} of $G$ gives rise to the short exact sequence~\eqref{eq:fundamental-exact-seq} with image contained in $\Aut(\mathbb P^1)$. Using the classification of finite subgroups of $\Aut(\mathbb P^1)$, we obtain strong restrictions on $G$, which in turn yield an upper bound for the number of Galois lines.

%%%%%
\subsection{The number of $S_3$-lines when $\rank Q=3$}

Assume that $\rank Q=3$, so that $C$ admits a unique trigonal pencil $g^1_3$. Let $G$ be the subgroup of $\Aut(C)$ generated by the groups $G_l$, where $l$ ranges over all $S_3$-lines. Since every element of $G$ preserves the pencil $g^1_3$ up to equivalence, Lemma~\ref{lemma:action-on-trigonal} yields a homomorphism $\varphi:G\to\Aut(\mathbb P^1)$ and the associated short exact sequence
\begin{equation}\label{eq:exact-seq-one-trigonal}
	1 \longrightarrow \Ker\varphi \longrightarrow G
	\xrightarrow{\ \varphi\ } \img\varphi\longrightarrow 1.
\end{equation}

\begin{proposition}
	Assume that $\rank Q = 3$. Then:  
		\begin{enumerate}[label=\upshape(\Roman*)]
			\item if $\Ker \varphi =1$, then the number of $S_3$-lines is at most $10$;
			\item if $\Ker \varphi \ne 1$, then the number of $S_3$-lines is at most $4$.
		\end{enumerate}	
\end{proposition}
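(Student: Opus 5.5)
We will treat the two cases separately. The main tools are the short exact sequence \eqref{eq:exact-seq-one-trigonal}, the classification of finite subgroups of $\Aut(\mathbb P^1)$ (cyclic, dihedral, $A_4$, $S_4$, $A_5$), and Proposition~\ref{prop:distinct_l_induce_distinct_G_l}. That proposition says the assignment $l\mapsto\langle\sigma_l\rangle$, where $\sigma_l\in G_l$ has order $3$, is injective on the set of $S_3$-lines. So in both cases it suffices to bound the number of order-$3$ subgroups of $G$ of the form $\langle\sigma_l\rangle$.

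\emph{Case (I).} If $\Ker\varphi=1$, then $G\cong\img\varphi$ is a finite subgroup of $\Aut(\mathbb P^1)$, and every $G_l\cong S_3$ embeds in it. Cyclic groups and $A_4$ contain no $S_3$, so there are no $S_3$-lines in those cases. A dihedral group contains exactly one subgroup of order $3$, giving at most one line. $S_4$ has $4$ subgroups of order $3$ and $A_5$ has $10$. Hence there are at most $10$ $S_3$-lines.

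\emph{Case (II).} Suppose $\Ker\varphi\neq1$. The group $\Ker\varphi$ acts faithfully on the general fibre of $g^1_3$, and that action is free on a general fibre of size $3$. Hence $\Ker\varphi\cong C_3$, and $C\to C/\Ker\varphi\cong\mathbb P^1$ is a cyclic trigonal cover. By Riemann--Hurwitz ($6=-6+2r$) it has exactly $6$ totally ramified points. Since $\Ker\varphi$ is normal in $G$, the group $\img\varphi$ acts on the quotient $\mathbb P^1$ and preserves this $6$-point branch set $B$.

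Next we split the lines. At most one $S_3$-line has $\sigma_l\in\Ker\varphi$, since otherwise two lines would share $\langle\sigma_l\rangle=\Ker\varphi$. For every other line, $G_l\cap\Ker\varphi=1$, because the only normal subgroup of $S_3$ of order $3$ is $\langle\sigma_l\rangle$. Thus $\varphi$ embeds $G_l$ into $\img\varphi$ as an $S_3$, so $\img\varphi$ is dihedral $D_n$ with $3\mid n$, $S_4$, or $A_5$. The group $A_5$ is excluded because all its orbits on $\mathbb P^1$ have size $12$, $20$, $30$ or $60$, so it cannot preserve $B$. This leaves $D_n$, with one subgroup of order $3$, or $S_4$, with $4$ subgroups of order $3$ and $B$ the octahedral orbit of size $6$.

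The key claim, and the step I expect to be the main obstacle, is that $\varphi(\langle\sigma_l\rangle)$ determines $l$. If $\varphi(\langle\sigma_l\rangle)=\varphi(\langle\sigma_{l'}\rangle)$, then $\sigma_{l'}=\sigma_l^{\pm1}\kappa$ with $\kappa\in\Ker\varphi$, and all such elements lie in the abelian group $\langle\sigma_l\rangle\times\Ker\varphi\cong C_3\times C_3$. I would compare fixed points. By the description of $\Fix(\sigma)$ after Proposition~\ref{prop:generator-of-Gal-S_3-line}, the fixed points of $\sigma_l$ on $C$ lie on a line disjoint from $l$ (at most $3$ points) and on $l$. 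Using the Riemann--Hurwitz count for $C\to C/\langle\sigma_l\rangle$, I would check that only one of the three non-kernel subgroups of $C_3\times C_3$ can have the fixed-point configuration, relative to $B$ and the two fixed points of $\varphi(\sigma_l)$ on $\mathbb P^1$, that an $S_3$-line requires. The involution $\tau_l$, which inverts $\sigma_l$, should single that subgroup out, since it normalizes only subgroups compatible with its action on $\Ker\varphi$.

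Granting the claim, the count is at most $4$ in the $S_4$ case. It is at most $1+1=2$ in the dihedral case, allowing one extra line with $\sigma_l\in\Ker\varphi$. In the $S_4$ case I would still have to exclude the coexistence of a line with $\sigma_l\in\Ker\varphi$ alongside four others. I would try to do this by observing that such a line forces $G_l\supset\Ker\varphi$, so $\varphi(G_l)$ has order $2$. Then the $\Ker\varphi$-fixed points on $C$, which lie over $B$, must be compatible with the fixed locus of that $\sigma_l$, and this should be incompatible with an $S_4$-action on $B$. Together these give the bound $4$ in Case (II).
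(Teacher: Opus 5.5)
\textbf{There is a genuine gap in Case (II).}

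Your Case (I) is the paper's argument. In Case (II) your reduction ($\Ker\varphi\cong C_3$, six branch points, counting order-$3$ subgroups lying in $S_3$'s of $\img\varphi$) has the paper's shape. Excluding $A_5$ through its orbit sizes on $\mathbb{P}^1$ is correct and cleaner than the paper's appeal to the Hurwitz bound.

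The gap is the key claim that $\varphi(\langle\sigma_l\rangle)$ determines $l$. It carries all the content of (II) and you only sketch it. As sketched it cannot close, because the fixed-point comparison hinges on whether $\tau_l$ commutes with or inverts $\eta$. Group theory does not decide this, and you never determine it.

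To see why it matters: $\sigma_l$ normalizes $\Ker\varphi$ and has order $3$, so it commutes with $\eta$. Hence it acts as $\eta^{a}$ and $\eta^{b}$ on the fibres of $g^1_3$ over the two fixed points of $\varphi(\sigma_l)$. Since $\varphi(\tau_l)$ swaps these points, you get $b=-a$ if $\tau_l\eta\tau_l=\eta$, but $b=a$ if $\tau_l\eta\tau_l=\eta^{-1}$. In the latter case $\sigma_l$ and $\sigma_l\eta^{a}$ would both act freely on $C$, so their fixed-point data would not distinguish them.

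The missing input is geometric: $\eta$ commutes with $G_l$. The paper shows that $\eta$ is a homology with centre the vertex $(0:1:0:0)$ of the cone $X^2-ZW=0$ and axis the plane $H\colon aX+(b-1)Y+cZ+dW=0$ through the six ramification points. Since $H$ is intrinsic to $C$, $\sigma(H)=H$ forces $c=d=0$. So $\eta$ commutes with $\sigma$ and $\tau$ and can be normalized to $\operatorname{diag}(1,\omega,1,1)$. Then $\sigma_{l'}=\eta\sigma_l=\operatorname{diag}(1,\omega,\omega,\omega^2)$ would force $l'\colon Y=Z=0$. The relations on $\tau_{l'}$ then make it interchange the $X$- and $W$-coordinates, which does not preserve $X^2-ZW$; the case $\eta^2\sigma_l$ is the same.

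With commutation in hand, your fixed-point route also works. In rank $3$ every $\sigma_l$ acts freely on $C$: its fixed line $Z=W=0$ meets the cone only at the vertex, and $l\cap C=\emptyset$. On the other hand, $b=-a\neq 0$ makes each of $\sigma_l\eta^{\pm1}$ fix an entire fibre pointwise, so neither can be the order-$3$ element of an $S_3$-line.

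The other unproved step is excluding, in the $S_4$ case, an $S_3$-line with $\sigma_l\in\Ker\varphi$ alongside four others. This needs no $S_4$-action on $B$. A generator of $\Ker\varphi$ fixes the six ramification points of the cyclic cover $g^1_3$. By contrast, $\sigma_l$ fixes at most three points of $C$, since $l\cap C=\emptyset$ (as $\deg\pi_l=6$) and the fixed line of $\sigma_l$ meets $C$ in at most three points. The paper instead derives $\varphi(G_l)\cong S_3$ from the commutation of $\eta$ with $\tau_l$, which also removes your extra line in the dihedral case.

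As written, both steps are stated as expectations (``I would check'', ``should single that subgroup out'', ``should be incompatible'') rather than proved. So your proposal does not establish the bound $4$ in Case (II).
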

\begin{proof}

	(I) Assume that $\Ker \varphi =1$. Then, $G \cong \img \varphi \subset \Aut(\mathbb{P}^1)$.  
	It is well known that any finite subgroup of $\Aut(\mathbb{P}^1)$ is isomorphic to one of the following groups: a cyclic group $C_m$ of order $m$, a dihedral group $D_m$ of order $2m$, the alternating group $A_4$ on $4$ letters, the symmetric group $S_4$ on $4$ letters, or the alternating group $A_5$ on $5$ letters. 
	The number of $C_3$ subgroups contained in $S_3$ subgroups of $C_m$ (resp. $D_m$, $A_4$, $S_4$, $A_5$) is equal to $0$ (resp. at most $1$, equal to $0$, equal to $4$, equal to $10$). Using Proposition~\ref{prop:distinct_l_induce_distinct_G_l}, we conclude that the number of $S_3$-lines is at most $10$. 

	(II) Assume that $\Ker \varphi \ne1$. We first note the following. 
	\begin{claim}
		The trigonal morphism $g^1_3$ is Galois and $\Ker\varphi \cong C_3$.
	\end{claim}
	\begin{claimproof}
		Let $\eta \in \Ker\varphi$. Then $g^1_3 \circ \eta = g^1_3$,
		hence $g^1_3$ is Galois. Since $\deg g^1_3=3$, we have $\Ker\varphi\cong C_3$.
	\end{claimproof}
	
	By the classification of finite subgroups of $\Aut(\mathbb{P}^1)$, $\img \varphi$ is isomorphic to $C_m$, $D_m$, $A_4$, $S_4$, or $A_5$. If $\img \varphi \cong A_5$, then $|G| = 3 \cdot 60 = 180$. This contradicts the Hurwitz bound, since the restrictions arising in the proof of the Hurwitz theorem near the maximal bound exclude the value $180$. So, $\img \varphi \cong C_m$, $D_m$, $A_4$, or $S_4$.

	Let $l$ be an $S_3$-line. Then, we may assume that $G_l = \langle \sigma, \tau \rangle$, where $\sigma$ and $\tau$ are the matrices \eqref{eq:representation-of-S_3} in Proposition~\ref{prop:generator-of-Gal-S_3-line}. Let $Q=0$ and $F=0$ be the defining equations of $C$, where $Q$ is a quadratic form and $F$ is a cubic form. Since $\sigma^* Q = c Q$ and $\tau^* Q = c' Q$ for some $c,c' \in k\setminus \{0\}$, we may assume that $Q=X^2-ZW$. Let $\eta \in \Aut(C)$ be a generator of the Galois group associated to $g^1_3$, i.e., $g^1_3 \circ \eta = g^1_3$ and $\ord(\eta) = 3$. The unique trigonal morphism $g^1_3: C \rightarrow \mathbb{P}^1$ is given by the projection from the vertex $(0:1:0:0)$ of $Q=0$, that is, $g^1_3: C \rightarrow (X^2-ZW=0) \subset \mathbb{P}^2$, $(X:Y:Z:W) \mapsto (X:Z:W)$. Note that the plane quadric curve $X^2 -ZW =0$ is isomorphic to $\mathbb{P}^1$.
	 
	\begin{claim} \label{claim:representation-of-eta}
		The representation matrix of $\eta$ is expressed as 
			\[ \begin{pmatrix}
				1 & 0 & 0 & 0 \\
				a & b & c & d \\
				0 & 0 & 1 & 0 \\
				0 & 0 & 0 & 1 \\
			\end{pmatrix},  \]
		where $a,b,c,d \in k$, $b^3=1$ and $b \ne 1$. 
	\end{claim}
	\begin{claimproof}
		Let $(a_{ij})$ be a representation matrix of $\eta$. Namely, 
			\[
				(X:Y:Z:W) \mapsto (X':Y':Z':W'), %\quad 
				{}^t(X':Y':Z':W') = (a_{ij})\, {}^t(X:Y:Z:W).
			\] 
			From $g^1_3 \circ \eta = g^1_3$, we have that $(X:Z:W)=(X':Z':W')$ for every point $(X:Y:Z:W) \in C$. Since $I(C)=(Q,F)$, 
			\[
			XZ'-ZX', \, ZW'-WZ', \, WX'-XW' \in (Q).  
			\]  
			From $ZW'-WZ' = Z(a_{41}X+a_{42}Y+a_{43}Z+a_{44}W)-W(a_{31}X+a_{32}Y+a_{33}Z+a_{34}W)$ and $Q=X^2-ZW$, we see that $ZW'-WZ' = 0$ identically. We have that $a_{33} = a_{44}$ and $a_{ij} = 0$ where $i \ne j$, $i \in \{3,4\}$ and $j \in \{1,2,3,4\}$. From $XZ'-ZX' = X(a_{33}Z)-Z(a_{11}X+a_{12}Y+a_{13}Z+a_{14}W)$ and $Q=X^2-ZW$, we see that $XZ'-ZX' =  0$ identically. We have that $a_{11}=a_{33}$ and $a_{12}=a_{13}=a_{14}=0$. Since the order of $\eta$ equals $3$, we conclude the Claim~\ref{claim:representation-of-eta}. 
	\end{claimproof}
		
	Let $P_1, \dots, P_6$ be all the ramification points of $g^1_3$. 
		By Lemma 3.3 in \cite{KomedaTakahashi2022}, these 6 points are on a plane $H$. Since $\eta$ is represented by the matrix above, the plane $H$ is given by $aX+(b-1)Y+cZ+dW = 0$. Since the six points $P_1, \dots, P_6$ are determined by the unique cyclic trigonal pencil $g^1_3$, the plane $H$ is determined by $C$. Hence, $\sigma (H) = H$ and $c = d = 0$. We have that 
	\begin{equation} \label{eq:commutativity}
		\eta \sigma = \sigma \eta \text{ and } \eta \tau = \tau \eta. 
	\end{equation}
	Since $c=d=0$, by taking a suitable projective transformation preserving the matrices \eqref{eq:representation-of-S_3} and $Q=X^2-ZW$, we may assume that 
	\begin{equation} \label{eq:eta}
		\eta = \begin{pmatrix}
			1 & 0 & 0 & 0 \\
			0 & \omega & 0 & 0 \\
			0 &	0 & 1 & 0 \\
			0 & 0 & 0 & 1 \\
		\end{pmatrix}, 
	\end{equation}
	where $\omega$ is a primitive cubic root of unity. 
	
	\begin{claim} \label{claim:varphi(Gl)=S3}
		$\varphi(G_l) \cong S_3$.
	\end{claim}
	\begin{claimproof}
		Suppose that $\varphi(G_l) \not\cong S_3$. Then $\langle \eta \rangle \subset G_l$. We have that $\sigma = \eta$ or $\eta^2$. The equation $\eta \tau = \tau \eta$ contradicts $\sigma \tau = \tau \sigma^2 \ne \tau\sigma$. 
	\end{claimproof}
	
	\begin{claim} \label{claim:varphi(Gl)-ne-varphi(Gl')}
		Let $l'$ be an $S_3$-line with $l \ne l'$. Let $\sigma'$ be an element in $G_{l'}$ whose order equals $3$. Then $\langle \varphi(\sigma) \rangle \ne \langle \varphi(\sigma') \rangle$, in particular, $\varphi(G_l) \ne \varphi(G_{l'})$.
	\end{claim}
	\begin{claimproof}
		Suppose that $\langle \varphi(\sigma) \rangle = \langle \varphi(\sigma') \rangle$. We may assume that $ \varphi(\sigma)  =  \varphi(\sigma') $. Then, $\sigma^{-1} \sigma' \in \langle \eta \rangle$. 
		Namely, $\sigma'=\eta \sigma$ or $\eta^2 \sigma$. Assume that 
		\[
			\sigma' = \eta \sigma = 
				\begin{pmatrix}
					1 & 0 & 0 & 0 \\
					0 & \omega & 0 & 0 \\
					0 & 0 & \omega & 0 \\
					0 & 0 & 0 & \omega^2
				\end{pmatrix}. 
		\]
			
		Let $(t_{ij})$ be the representation matrix of $\tau'$. Using the note preceding Proposition~\ref{prop:distinct_l_induce_distinct_G_l}, the $S_3$-line $l'$ is given by $Y=Z=0$. Using Lemma~\ref{lemma:representation-when-l:X=Y=0}, $t_{22}=t_{33}$ and $t_{ij}=0$ where $i \ne j$, $i \in \{2,3\}$ and $j \in \{1,2,3,4\}$. We may assume that $t_{22}=t_{33}=1$. Since $\tau' \eta = \eta \tau'$, we see that $t_{12}=t_{42}=0$. Since $\sigma' \tau' = \tau' \sigma'^2$, we have that $t_{11}=t_{13}=t_{43}=t_{44}=0$. Since $\tau'^2=\id$, 
		\[
			\tau'=
			\begin{pmatrix}
				0 & 0 & 0 & \alpha \\
				0 & 1 & 0 & 0 \\
				0 & 0 & 1 & 0 \\
				\alpha^{-1} & 0 & 0 & 0 \\
			\end{pmatrix}, 
		\] 
		where $\alpha \in k\setminus\{0\}$. 
			
		However, since $Q=X^2-ZW$, the automorphism $\tau'$ does not satisfy the condition $\tau'^*Q = cQ$ where $c \in k \setminus \{0\}$, this is a contradiction. In the case that $\sigma' = \eta^2 \sigma$, by a similar argument, we also have a contradiction. We conclude that $\langle \varphi(\sigma) \rangle \ne \langle \varphi(\sigma') \rangle$. 
	\end{claimproof}
			
	The number of $C_3$ subgroups contained in $S_3$ subgroups in $C_m$ (resp. $D_m$, $A_4$, $S_4$) is equal to $0$ (resp. at most $1$, equal to $0$, equal to $4$). We conclude that the number of $S_3$-lines is at most $4$. 
\end{proof}

%%%%%
\subsection{The number of $S_3$-lines when $\rank Q=4$}

Assume that $\rank Q=4$, so that $C$ admits exactly two trigonal pencils $g^1_3$ and $h^1_3$. Let
\[
	O_3:=\{\,\sigma\in\Aut(C)\mid \ord(\sigma)=3 \text{ and } \sigma\in G_l
	\text{ for some $S_3$-line } l\,\}.
\]
In order to show that the number of $S_3$-lines is at most $10$, it suffices to prove that $\# O_3\le 20$.

For each $\sigma\in O_3$, both $\sigma^*g^1_3$ and $\sigma^*h^1_3$ are trigonal pencils. Since $\ord(\sigma)=3$ and there exist exactly two trigonal pencils on $C$, it follows that
\[
	\sigma^*g^1_3\simeq g^1_3
	\quad\text{and}\quad
	\sigma^*h^1_3\simeq h^1_3 .
\]
Let $H$ be the subgroup of $\Aut(C)$ generated by $O_3$. Then every element of $H$ preserves both pencils $g^1_3$ and $h^1_3$ up to equivalence.

Applying Lemma~\ref{lemma:action-on-trigonal} to each trigonal pencil, we obtain homomorphisms
\[
	\varphi_g,\varphi_h:H\longrightarrow\Aut(\mathbb P^1),
\]
and the associated short exact sequences
\begin{equation}\label{eq:exact-seq-two-trigonal}
	\begin{aligned}
		1&\longrightarrow\Ker\varphi_g\longrightarrow H
		\xrightarrow{\ \varphi_g\ }\img\varphi_g\longrightarrow 1,\\
		1&\longrightarrow\Ker\varphi_h\longrightarrow H
		\xrightarrow{\ \varphi_h\ }\img\varphi_h\longrightarrow 1.
	\end{aligned}
\end{equation}

\begin{proposition}
	Assume that $\rank Q = 4$, and at least one of $\Ker \varphi_g$ and $\Ker \varphi_h$ is trivial. Then, the number of $S_3$-lines is at most $10$.  
\end{proposition}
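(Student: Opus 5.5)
Without loss of generality we assume $\Ker\varphi_g = 1$; the case $\Ker\varphi_h=1$ is symmetric. Then $\varphi_g$ embeds $H$ into $\Aut(\mathbb P^1)$. By the classification of finite subgroups recalled in case (I) of the $\rank Q=3$ argument, $H$ is isomorphic to one of $C_m$, $D_m$, $A_4$, $S_4$, or $A_5$. We do not know a priori that $H$ is finite, so we first note this separately: $H \subset \Aut(C)$ and $C$ has genus $4\ge 2$, so $\Aut(C)$ is finite.

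The goal is to bound $\#O_3 \le 20$, as reduced in the text. The key point is that every element of $O_3$ lies in $H$ and has order $3$. Moreover, each $\sigma\in O_3$ is contained in an $S_3$ subgroup $G_l$, but $G_l$ need not lie in $H$, since $\tau\in G_l$ might swap $g^1_3$ and $h^1_3$. Hence we cannot directly count ``$C_3$ contained in $S_3$ inside $H$''. Instead we simply count all elements of order $3$ in $H$:
\begin{itemize}
\item for $C_m$ there are at most $2$;
\item for $D_m$ there are at most $2$ (those in the rotation subgroup);
\item for $A_4$ there are $8$;
\item for $S_4$ there are $8$;
\item for $A_5$ there are $20$.
\end{itemize}
In every case the number of elements of order $3$ in $H$ is at most $20$, so $\#O_3\le 20$.

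To conclude, we use Proposition~\ref{prop:distinct_l_induce_distinct_G_l}. Distinct $S_3$-lines give distinct subgroups $\langle\sigma\rangle$ of order $3$, and each such subgroup contributes exactly two elements to $O_3$. Hence the number of $S_3$-lines is at most $\#O_3/2 \le 10$.

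We expect the only subtle step to be this one: noting that $G_l$ itself may not lie in $H$, which is why we bound by all order-$3$ elements rather than by $S_3$-subgroups. The crude count happens to reach exactly $20$ only in the $A_5$ case, so it suffices. No Hurwitz-type exclusion is needed.
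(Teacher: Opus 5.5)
Your proposal is correct and is essentially the paper's own proof: you embed $H$ into $\Aut(\mathbb P^1)$ via the injective $\varphi_g$, count the elements of order $3$ in $C_m$, $D_m$, $A_4$, $S_4$, $A_5$ to get $\#O_3\le 20$, and pass to lines via Proposition~\ref{prop:distinct_l_induce_distinct_G_l}. Your extra remarks only make explicit what the paper leaves implicit: finiteness of $\Aut(C)$ in genus $4$, and the fact that $G_l\not\subset H$ in general, which is why one counts all order-$3$ elements of $H$ rather than $S_3$-subgroups.
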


\begin{proof}
	Assume that $\Ker \varphi_g = 1$. Then, $H \cong \img \varphi_g \subset \Aut(\mathbb{P}^1)$. By the classification of finite subgroups of $\Aut(\mathbb{P}^1)$, $H \cong C_m$, $D_m$, $A_4$, $S_4$, or $A_5$. The number of elements with order three in $C_m$ (resp. $D_m$, $A_4$, $S_4$, $A_5$) is at most two (resp. at most two, equal to $8$, equal to $8$, equal to $20$). We have that $\# O_3 \leqq 20$. 
\end{proof}

\begin{proposition} \label{prop:two_trigonal_are_Gal_S3Lines}
	Assume that $\rank Q = 4$, and that both $\Ker \varphi_g$ and $\Ker \varphi_h$ are non-trivial. Then the number of $S_3$-lines is at most $2$.  
\end{proposition}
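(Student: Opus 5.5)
The plan is to show that every order-three element $\sigma$ lying in some $G_l$, for an $S_3$-line $l$, belongs to the abelian group $\langle \eta_g,\eta_h\rangle\cong C_3\times C_3$. Here $\eta_g,\eta_h$ are generators of $\Ker\varphi_g$ and $\Ker\varphi_h$. Once this is done, Proposition~\ref{prop:distinct_l_induce_distinct_G_l} lets us count subgroups of order $3$ in that group.

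\emph{Step 1: normal form for the two kernels.} By the argument of the Claim in the case $\rank Q=3$, both $g^1_3$ and $h^1_3$ are Galois, with $\Ker\varphi_g=\langle\eta_g\rangle\cong C_3$ and $\Ker\varphi_h=\langle\eta_h\rangle\cong C_3$. Write $\{Q=0\}\cong\mathbb P^1\times\mathbb P^1$ with coordinates $((s_0:s_1),(t_0:t_1))$ and $X=s_0t_0$, $Y=s_0t_1$, $Z=s_1t_0$, $W=s_1t_1$, so that $Q=XW-YZ$. Since $\eta_g$ is the identity on the first factor, it acts on the second factor with order $3$, because its action on $C$ is faithful and $C\to\mathbb P^1\times\mathbb P^1$ is injective. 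Symmetrically, $\eta_h$ acts with order $3$ on the first factor and trivially on the second. After diagonalizing these actions on the two factors, I expect
\[
\eta_g=\mathrm{diag}(1,\omega,1,\omega),\qquad \eta_h=\mathrm{diag}(1,1,\omega,\omega),
\]
and these two elements commute. Each of them has eigenvalue pattern $(1,1,\omega,\omega)$ up to scalar.

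\emph{Step 2: $\sigma\in\langle\eta_g,\eta_h\rangle$ for every $\sigma\in O_3$.} The subgroup $\Ker\varphi_h$ is normal in $H$, so $\varphi_g(\sigma)$ normalizes $\langle\varphi_g(\eta_h)\rangle\subset\Aut(\mathbb P^1)$. The latter is a cyclic group of order $3$ with two fixed points. Its normalizer in $\Aut(\mathbb P^1)$ is the torus fixing those two points, extended by the involutions swapping them. Since $\varphi_g(\sigma)$ has order dividing $3$, it cannot swap the two points, so it lies in the torus. The torus has a unique subgroup of order $3$, namely $\langle\varphi_g(\eta_h)\rangle$. Hence $\varphi_g(\sigma)=\varphi_g(\eta_h^j)$ for some $j$, and therefore $\sigma\eta_h^{-j}\in\Ker\varphi_g=\langle\eta_g\rangle$, which gives $\sigma\in\langle\eta_g,\eta_h\rangle$.

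\emph{Step 3: counting.} The group $C_3\times C_3$ has exactly four subgroups of order $3$: $\langle\eta_g\rangle$, $\langle\eta_h\rangle$, $\langle\eta_g\eta_h\rangle$ and $\langle\eta_g\eta_h^2\rangle$. By Proposition~\ref{prop:generator-of-Gal-S_3-line}, the generator $\sigma$ of an $S_3$-line has eigenvalues $(1,1,\omega,\omega^2)$ up to scalar. This rules out $\langle\eta_g\rangle$ and $\langle\eta_h\rangle$, whose elements have pattern $(1,1,\omega,\omega)$ up to scalar. A second argument for the same exclusion: $\sigma$ generates a normal subgroup of $G_l\cong S_3$ and fixes $l$ pointwise, whereas $\eta_g$ is the deck transformation of $g^1_3$. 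The remaining two subgroups are $\langle\eta_g\eta_h\rangle$ and $\langle\eta_g\eta_h^2\rangle$. Proposition~\ref{prop:distinct_l_induce_distinct_G_l} says that distinct $S_3$-lines have distinct order-three subgroups, so there are at most $2$ $S_3$-lines.

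The main obstacle is Step~1, namely the simultaneous normalization. It uses that $\eta_g$ and $\eta_h$ act faithfully with order $3$ on the respective other factor and can be diagonalized compatibly. Step~2 does not depend on this normalization, but the eigenvalue comparison in Step~3 does. If the explicit form turns out to be awkward, I would fall back on the rank-$3$ technique: use the ramification planes $H_g,H_h$ from Lemma~3.3 of \cite{KomedaTakahashi2022} to pin down the matrices of $\eta_g$ and $\eta_h$.
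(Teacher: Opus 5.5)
Your proof is correct, but you reach the key fact $O_3\subset\langle\eta_g,\eta_h\rangle$ by a different route from the paper.

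The paper starts from the explicit equations of a curve with two Galois trigonal pencils, taken from \cite{KomedaTakahashi2021}. It shows that every element of $H$ stabilizes each of the four fixed lines $l_1,\dots,l_4$ of $\eta_g$ and $\eta_h$, and deduces the block form $\mathrm{diag}(\alpha B,B)$. It then finds the possible $B$ by letting them act on $C\cap l_1$, $C\cap l_2$ and $C\cap l_3$, with a case split according to whether the parameter $c$ vanishes.

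You replace this computation by a group-theoretic step. Since $\Ker\varphi_h$ is normal in $H$, the element $\varphi_g(\sigma)$ normalizes the order-$3$ group $\langle\varphi_g(\eta_h)\rangle$. Hence it permutes that group's two fixed points, cannot swap them because its order divides $3$, and so lies in the unique subgroup of order $3$ of the corresponding torus. Your Step~1 likewise uses only the product structure of $\{Q=0\}$, not the equation of $C$: an automorphism preserving $g^1_3$ cannot exchange the rulings, since $g^1_3\not\simeq h^1_3$.

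Your route buys three things:
\begin{enumerate}
\item brevity;
\item independence of $c$;
\item the exact identification $H=\langle\eta_g,\eta_h\rangle\cong C_3\times C_3$ in all cases.
\end{enumerate}
For $c=0$, the group $C_3\times S_3$ displayed in the paper can only contain $H$, not equal it, because its elements of order $3$ generate a subgroup isomorphic to $C_3\times C_3$. In return, the paper's computation gives explicit matrices for every candidate automorphism singled out by the configuration $l_1,\dots,l_4$, which is more information than the bound requires.

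The final count is the same in both proofs: eigenspace dimensions $2,1,1$ versus $2,2$, then Proposition~\ref{prop:distinct_l_induce_distinct_G_l}. Your ``second argument'' for excluding $\langle\eta_g\rangle$ and $\langle\eta_h\rangle$ is not a proof as written, but the eigenvalue comparison suffices on its own.
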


\begin{proof}
	For $\eta \in \Ker\varphi_g$, we have that $g^1_3 \circ \eta = g^1_3$. Hence, $g^1_3$ is a Galois covering. Similarly, $h^1_3$ is also a Galois covering. By Corollary~3.3 in \cite{KomedaTakahashi2021}, we may assume that $C$ is given by the equations
	\[ 
		Q=XW-YZ=0 \text{ and } 
		F= Z(W-Z)(W+Z)-Y^3-cXY^2+9X^2Y+cX^3=0, 
	\]
	where $c \in k$. Moreover, 
	\[g^1_3: (X:Y:Z:W) \longmapsto (X:Y), \qquad h^1_3: (X:Y:Z:W) \longmapsto (X:Z),\] 
	and the Galois groups associated to $g^1_3$ and $h^1_3$ is generated by 
	\begin{equation}
		\eta_g :=\begin{pmatrix}
			1 & 0 & 0 & 0 \\
			0 & 1 & 0 & 0 \\
			0 & 0 & \omega & 0 \\
			0 & 0 & 0 & \omega \\
		\end{pmatrix} 
		\qquad \text{and} \qquad
		\eta_h:=\begin{pmatrix}
			1 & 1 & 0 & 0 \\
			-3 & 1 & 0 & 0 \\
			0 & 0 & 1 & 1 \\
			0 & 0 & -3 & 1 \\
		\end{pmatrix}, 
	\end{equation}
	respectively. 
	
	Let $l_1$, $l_2$, $l_3$ and $l_4$ be lines 
	\[X=Y=0, \qquad Z=W=0, \qquad -\sqrt{-3}X+Y=-\sqrt{-3}Z+W =0, \qquad \text{and}\]
	\[\sqrt{-3}X + Y=\sqrt{-3}Z + W =0,\] 
	respectively. We have that 
	\[\Fix(\eta_g) := \{P \in \mathbb{P}^3 \mid \eta_g(P) = P\} = l_1 \cup l_2 \qquad \text{and} \qquad \Fix(\eta_h) = l_3 \cup l_4.\] 
	The number of all ramification points of $g^1_3$ (resp. $h^1_3$) equals $6$, and the $3$ points of them are on $l_1$ (resp. $l_3$) and other $3$ points are on $l_2$ (resp. $l_4$).  
	
	Since $C$ has exactly two trigonal pencils $g^1_3$ and $h^1_3$, for any $\sigma \in \Aut(C)$, we have that $\sigma (l_1 \cup l_2) = l_1 \cup l_2$ or $\sigma (l_1 \cup l_2) = l_3 \cup l_4$. For each $\sigma \in O_3$, since $\ord \sigma =3$, we have that $\sigma(l_1 \cup l_2) = l_1 \cup l_2$. Hence, since $\ord \sigma = 3$ again, we have that $\sigma(l_1)=l_1$ and $\sigma(l_2)=l_2$. Similarly, $\sigma(l_3)=l_3$ and $\sigma(l_4) = l_4$. For every $\sigma \in H$, we have that $\sigma (l_i) = l_i$, $i=1,2,3,4$. 
	
	Let $\sigma \in H$. Then, since $\sigma(l_i) = l_i$ ($i=1,2$), $\sigma$ must be represented by a matrix  
	\[\begin{pmatrix}
		a_{11} & a_{12} & 0 & 0 \\
		a_{21} & a_{22} & 0 & 0 \\
		0 & 0 & b_{11} & b_{12} \\
		0 & 0 & b_{21} & b_{22} \\
	\end{pmatrix}, \]
	where $a_{i j}, b_{ij} \in k$ ($i,j \in \{1,2\}$). Let $A$ and $B$ be the $2 \times 2$ matrices $(a_{ij})$ and $(b_{ij})$. Since $\sigma ^* Q = a Q$ for some $a \in k \setminus \{0\}$, checking the coefficients of $XZ$, $YW$, $XW$, and $YZ$ of both sides, we see that $A = \alpha B$ for some $\alpha \in k \setminus \{0\}$. Namely, 
	\[\sigma = \begin{pmatrix}
		\alpha B & O \\
		O & B
	\end{pmatrix}. \]
	
	Since the $\sigma$ acts on the set 
	\[l_1 \cap C = \{ (0:0:0:1), (0:0:1:1), (0:0:-1:1) \}, \] 
	we see that $B$ is  
	\[ b_1 \begin{pmatrix} 1 & 0 \\ 0 & 1 \end{pmatrix}, b_2 \begin{pmatrix} 1 & 1 \\ -3 & 1 \end{pmatrix}, b_3 \begin{pmatrix} 1 & -1 \\ 3 & 1 \end{pmatrix}, b_4 \begin{pmatrix} -1 & 0 \\ 0 & 1 \end{pmatrix}, b_5 \begin{pmatrix} -1 & 1 \\ 3 & 1 \end{pmatrix}, \text{ or } b_6 \begin{pmatrix} 1 & 1 \\ 3 & -1 \end{pmatrix},\]
	where $b_i \in k\setminus \{0\}$ ($i=1, \dots, 6$). Checking the condition $\sigma (C \cap l_2) = C \cap l_2$, we see that if $c \ne 0$ then 
	\[b_4 \begin{pmatrix} -1 & 0 \\ 0 & 1 \end{pmatrix}, b_5 \begin{pmatrix} -1 & 1 \\ 3 & 1 \end{pmatrix}, \text{ and } b_6 \begin{pmatrix} 1 & 1 \\ 3 & -1 \end{pmatrix}\]
	are unsuitable for $B$. 
	
	Assume that $c \ne 0$. Then, $\sigma$ is 
	\[\begin{pmatrix}
		1 & 0 & 0 & 0 \\
		0 & 1 & 0 & 0 \\
		0 & 0 & \lambda_1 & 0 \\
		0 & 0 & 0 & \lambda_1 \\
	\end{pmatrix}, 
	\begin{pmatrix}
		1 & 1 & 0 & 0 \\
		-3 & 1 & 0 & 0 \\
		0 & 0 & \lambda_2 & \lambda_2 \\
		0 & 0 & -3 \lambda_2 & \lambda_2 \\
	\end{pmatrix}, \text{ or }
	\begin{pmatrix}
		1 & -1 & 0 & 0 \\
		3 & 1 & 0 & 0 \\
		0 & 0 & \lambda_3 & -\lambda_3 \\
		0 & 0 & 3 \lambda_3 & \lambda_3 \\
	\end{pmatrix}, \]
	for some $\lambda_i \in k \setminus \{0\}$ ($i=1,2,3$). Note that 
	\[C \cap l_3 = \{(1:\sqrt{-3}: \beta : \sqrt{-3}\beta) \mid \beta^3 = c+3 \sqrt{-3}\} \text{ and} \] 
	\[C \cap l_4 = \{(1: - \sqrt{-3}: \gamma : -\sqrt{-3}\gamma ) \mid \gamma^3 = c-3 \sqrt{-3}\}.\]
	Checking $\sigma(C \cap l_3) = C \cap l_3$, we see that $\lambda_1, \lambda_2, \lambda_3 \in \{1,\omega, \omega^2\}$. We conclude that 
	\[H = \Biggl\langle 
		\begin{pmatrix}
			1 & 0 & 0 & 0 \\
			0 & 1 & 0 & 0 \\
			0 & 0 & \omega & 0 \\
			0 & 0 & 0 & \omega \\
		\end{pmatrix}, 
		\begin{pmatrix}
			1 & 1 & 0 & 0 \\
			-3 & 1 & 0 & 0 \\
			0 & 0 & 1 & 1 \\
			0 & 0 & -3 & 1 \\
		\end{pmatrix}
	 \Biggr\rangle \cong C_3 \times C_3. \]
	 By Proposition~\ref{prop:generator-of-Gal-S_3-line}, we see that an element of $O_3$ has eigenspaces of dimension $1,1$ and $2$. Four elements of $H$ have eigenspaces of dimension $1,1$ and $2$. Hence $\# O_3 \leq 4$. This means that the number of $S_3$-lines is at most $2$. 
	 
	 Assume that $c=0$. 	Checking the condition $\sigma (C \cap l_3) = C \cap l_3$, we see that  
	\[b_4 \begin{pmatrix} -1 & 0 \\ 0 & 1 \end{pmatrix}  \text{ and } b_5 \begin{pmatrix} -1 & 1 \\ 3 & 1 \end{pmatrix}\]
	are unsuitable for $B$. Moreover, we have that 
	\[H = \Biggl\langle 
		\begin{pmatrix}
			1 & 0 & 0 & 0 \\
			0 & 1 & 0 & 0 \\
			0 & 0 & \omega & 0 \\
			0 & 0 & 0 & \omega \\
		\end{pmatrix}, 
		\begin{pmatrix}
			1 & 1 & 0 & 0 \\
			-3 & 1 & 0 & 0 \\
			0 & 0 & 1 & 1 \\
			0 & 0 & -3 & 1 \\
		\end{pmatrix}, 
		\begin{pmatrix}
			1 & 1 & 0 & 0 \\
			3 & -1 & 0 & 0 \\
			0 & 0 & 1 & 1 \\
			0 & 0 & 3 & -1 \\
		\end{pmatrix}
	 \Biggr\rangle \cong C_3 \times S_3. \]
	By Proposition~\ref{prop:generator-of-Gal-S_3-line}, we see that an element of $O_3$ has eigenspaces of dimension $1,1$ and $2$. Four elements of $H$ with order $3$ have eigenspaces of dimension $1,1$ and $2$. Hence $\# O_3 \leq 4$. This means that the number of $S_3$-lines is at most $2$. 
\end{proof}

%%%%%
\subsection{The number of $K_4$-lines when $\rank Q=3$}

Assume that $\rank Q=3$. Then $C$ admits a unique trigonal morphism $g^1_3$. Let $G$ be the subgroup of $\Aut(C)$ generated by the elements
\[
	\{\sigma\in\Aut(C)\mid \sigma\in G_l \text{ for some $K_4$-line } l\}.
\]
Since every element of $G$ preserves the trigonal morphism $g^1_3$ up to equivalence, Lemma~\ref{lemma:action-on-trigonal} yields a homomorphism
\[
	\varphi:G\longrightarrow\Aut(\mathbb P^1)
\]
and the associated short exact sequence
\begin{equation}
	1\longrightarrow\Ker\varphi\longrightarrow G
	\xrightarrow{\ \varphi\ }\img\varphi\longrightarrow 1.
\end{equation}

\begin{proposition}
	Assume that $\rank Q = 3$. Then, the number of $K_4$-lines is at most $5$.  
\end{proposition}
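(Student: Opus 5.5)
The plan is to follow the pattern of the $S_3$-case and split according to whether $\Ker\varphi$ is trivial. The counting device is Proposition~\ref{prop:two-G_ls-belonging-to-K_4-lines}. Each $K_4$-line $l$ determines a trace-zero involution $\sigma_l\in G_l$, and distinct lines give distinct $\sigma_l$ and distinct subgroups $G_l\cong K_4$. So it suffices to bound the number of $K_4$-subgroups of $G$ that can actually arise as some $G_l$. I will also use throughout that the vertex $V$ of the cone $\{Q=0\}$ is fixed by every element of $G$, because $G$ preserves $Q$ up to a scalar.

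\emph{Case $\Ker\varphi\neq 1$.} As in the $S_3$-case, $g^1_3$ is then Galois with $\Ker\varphi=\langle\eta\rangle\cong C_3$. Choose coordinates with $Q=X^2-ZW$, vertex $(0:1:0:0)$, and $\eta$ as in Claim~\ref{claim:representation-of-eta}. The plane through the six ramification points of $g^1_3$ is canonical, so it is $G$-stable, and as before $\eta$ commutes with every element of $G$. A $K_4$-subgroup has order prime to $3$, so it meets $\Ker\varphi$ trivially and maps isomorphically to $\varphi(G_l)\cong K_4\subset\img\varphi$. Moreover, distinct lines give distinct images: if $\varphi(G_l)=\varphi(G_{l'})$, then $\sigma_{l'}$ would equal $\sigma_l$ up to a factor in $\langle\eta\rangle$, and an order count forces $\sigma_{l'}=\sigma_l$, which is excluded. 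The finite subgroups of $\Aut(\mathbb P^1)$ containing $K_4$ are $D_m$ with $m$ even, $A_4$ and $S_4$. Here $|G|=3|\img\varphi|$ is bounded, e.g.\ $|\img\varphi|\le 28$ by the Hurwitz bound, and $A_5$ is excluded as before. In $A_4$ there is $1$ copy of $K_4$, in $S_4$ there are $4$, and in $D_m$ there are $m/2$. I then need to bound $m$ in the dihedral case.

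\emph{Case $\Ker\varphi=1$.} Here $G\cong\img\varphi$ is one of $C_m$, $D_m$, $A_4$, $S_4$, $A_5$. The number of $K_4$-subgroups is $0$, $m/2$ (for $m$ even), $1$, $4$ and $5$ respectively. So the general bound $5$ holds outright except in the dihedral case.

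\emph{Main obstacle: the dihedral case $D_m$ with $m$ large.} This is where I expect the real work. The first attempt is an order bound: a cyclic subgroup $C_m\subset\Aut(C)$ has $m\le 4g+2=18$, giving at most $9$ lines, which is not enough. The refinement I would try uses the fact that for $m$ even every $K_4\subset D_m$ contains the central involution $z$. By Proposition~\ref{prop:two-G_ls-belonging-to-K_4-lines}, $z$ can be the trace-zero element $\sigma_l$ for at most one line. For every other line, $z$ is a nonzero-trace involution of $G_l$, so $\Fix(z)$ is a plane plus a point. By Remark~\ref{rem:obtain_l_from_G_l=K4}, that isolated point lies on $l$, so all these lines pass through one fixed point $P_z$ and lie in the plane-geometry forced by $V\in\Fix$ of every involution. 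Then I would use the fact that $\deg\pi_l=4$ forces $\#(C\cap l)=2$ (Lemma~\ref{lemma:fixed-points-of-trace-0-involution}). I would aim to show that the partner involutions $r\in G_l\setminus\{1,z\}$ must fix the two points $C\cap l$, and then count these fixed points on $C$ using Riemann--Hurwitz for $C/\langle z\rangle$ and $C/\langle r\rangle$. This should bound the number of admissible reflections $r$, and hence give at most $5$ lines. If that fails, the fallback is to write $z$ and a rotation generator in normal form relative to $Q=X^2-ZW$, as in the $S_3$ computations, and check directly which reflections yield trace-zero elements compatible with $\Fix(\sigma_l)=l\cup l'$ and $l'\cap C=\emptyset$.
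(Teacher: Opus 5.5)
Your reduction agrees with the paper's. In both kernel cases everything comes down to $\img\varphi\cong D_m$ with $m$ even, where all $G_l$ share one involution $z$; when $\Ker\varphi\neq 1$, $z$ is the unique involutive lift of the central involution, using that $\eta$ is central. But this case is the entire difficulty, and you leave it as a plan whose central step is false.

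When $z$ has nonzero trace, $G_l\setminus\{1,z\}$ consists of two elements:
\begin{itemize}
\item the trace-zero involution $\rho_l$, which fixes $C\cap l$ pointwise;
\item a nonzero-trace involution which, like $z$, interchanges the two points of $C\cap l$.
\end{itemize}
The reason is that point stabilizers are cyclic and $K_4$ has no element of order $4$. So each ramification point of $\pi_l$ is fixed by exactly one nontrivial element of $G_l$.

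Moreover, Riemann--Hurwitz for the cyclic quotients $C/\langle z\rangle$ and $C/\langle r\rangle$ only reproduces the fixed-point numbers $6,6,2$ inside a single $G_l$, as in Lemma~\ref{lemma:fixed-points-of-trace-0-involution}. It gives no control over how many of the $m/2$ subgroups $K_4\subset D_m$ are of the form $G_l$. So no bound on the number of lines results.

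The missing idea, which is the paper's, is a linear-system count built on the six fixed points of $z$. Since $z$ is a single element, there are two cases.
\begin{itemize}
\item If $\operatorname{tr} z=0$, then $z$ is the trace-zero element of every $G_l$, and Proposition~\ref{prop:two-G_ls-belonging-to-K_4-lines} leaves at most one line.
\item Otherwise $z$ has nonzero trace in every $G_l$. Then it has exactly six fixed points $P_1,\dots,P_6$ on $C$, and every $\pi_l$ factors through $C\to C/\langle z\rangle$. For the other nonzero-trace $\tau\in G_l$, the point $P_j:=\tau(P_1)\neq P_1$ is again fixed by $z$. Hence $2P_1+2P_j$ is a fiber of $\pi_l$. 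Since $C$ has no $g^2_4$, we get $h^0(\mathcal{O}_C(2P_1+2P_j))=2$ and $\pi_l=\Phi_{|2P_1+2P_j|}$. By Proposition~\ref{prop:pi_l-determines-l}, $l$ is then determined by $j\in\{2,\dots,6\}$, giving at most $5$ lines.
\end{itemize}

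Your Riemann--Hurwitz instinct can only be rescued globally. In the kernel-trivial case, $sr^i$ and $sr^{i+m/2}$ are conjugate when $4\mid m$. That contradicts their different fixed-point counts $2\neq 6$. For $m\equiv 2\pmod 4$, apply Riemann--Hurwitz to $C\to C/D_m$. The contributions are $m$ and $3m$ from the two reflection classes and at least $6$ from $\Fix(z)$. This forces the quotient to have genus $0$ and the $z$-fixed points to have stabilizer exactly $\langle z\rangle$. Their orbits then have size $m$ and lie in $\Fix(z)\cap C$ because $z$ is central, so $m\le 6$.

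Two smaller slips:
\begin{itemize}
\item The Hurwitz bound gives $|G|\le 252$, i.e.\ $|\img\varphi|\le 84$, not $28$.
\item In your ``distinct images'' step, $\varphi(\sigma_{l'})$ need not equal $\varphi(\sigma_l)$. The correct argument: since $\eta$ is central, every involution of $\varphi^{-1}(\varphi(G_l))=G_l\times\langle\eta\rangle$ lies in $G_l$, which forces $G_{l'}=G_l$.
\end{itemize}
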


\begin{proof}
We assume that $\Ker \varphi = 1$. Then, $G \cong \img \varphi \subset \Aut(\mathbb{P}^1)$. By the classification of finite subgroups of $\Aut(\mathbb{P}^1)$, $G \cong C_m$, $D_m$, $A_4$, $S_4$, or $A_5$. By Proposition~\ref{prop:two-G_ls-belonging-to-K_4-lines}, we note that $G_l \not\cong G_{l'}$ for two distinct $K_4$-lines $l$ and $l'$. The number of $K_4$ subgroups of $C_m$ (resp. $D_m$, $A_4$, $S_4$, $A_5$) equals $0$ (resp. $0$ or $m/2$, $1$, $4$, $5$). Hence, if $\img \varphi \not\cong D_m$, or $\img \varphi \cong D_m$ where $m$ is odd, then the number of $K_4$-lines is at most $5$. Suppose that $G \cong \img \varphi \cong D_m$ and $m$ is even. Let $G = \langle \sigma, \tau \rangle$ where $\sigma^m = e$ and $\sigma \tau = \tau \sigma^{m-1}$. Then, $K_4$ subgroups of $G$ are $\{e, \sigma^{m/2}, \tau \sigma^l, \tau \sigma^{l+m/2} \}$, where $l=0, \dots, m/2$. Note that every $K_4$ subgroup contains $\sigma^{m/2}$. Let $l$ be a $K_4$-line. By Proposition~\ref{prop:generator-of-Gal-K4-line}, we may assume that $G_l$ is the group \eqref{eq:standard-representation-K4}. If 
	\[	\sigma^{m/2} = 
			\begin{pmatrix}
				1 & 0 & 0 & 0 \\
				0 & 1 & 0 & 0 \\
				0 & 0 & -1 & 0 \\
				0 & 0 & 0 & -1 \\	 
			\end{pmatrix},
	\] 
	then the number of $K_4$-lines equals $1$. Indeed, if there exists a $K_4$-line $l'$ other than $l$, then $\sigma^{m/2} \in G_{l'}$ holds. This contradicts Lemma~\ref{lemma:fixed-points-of-trace-0-involution}. 
	Assume \[	\sigma^{m/2} = 
			\begin{pmatrix}
				1 & 0 & 0 & 0 \\
				0 & 1 & 0 & 0 \\
				0 & 0 & -1 & 0 \\
				0 & 0 & 0 & 1 \\	 
			\end{pmatrix}.
	\] 
	Consider the morphism $\pi_l: C \rightarrow \mathbb{P}^1$. The degree $4$ morphism $\pi_l$ can be decomposed as $C \rightarrow C/\langle \sigma^{m/2} \rangle \rightarrow \mathbb{P}^1$. Let $P_1, \dots, P_6$ be ramification points of $C \rightarrow C/\langle \sigma^{m/2} \rangle$. We may assume that $\pi_l(P_1) = \pi_l(P_2)$, $\pi_l(P_3) = \pi_l(P_4)$, and $\pi_l(P_5) = \pi_l(P_6)$, i.e., $2P_1 + 2P_2$, $2P_3 + 2P_4$ , and $2P_5 + 2P_6$ are fibers of $\pi_l$. By the Riemann-Roch theorem, $\dim_k H^0(C, \mathcal{O}_C(2P_1 + 2P_2)) = 2.$ This means that $\pi_l$ is a morphism given by the complete linear system $|2P_1 + 2P_2|$, where $P_1$ and $P_2$ are two of the $6$ ramification points of $C \rightarrow C/\langle \sigma^{m/2} \rangle$. If there exists another $K_4$-line $l'$, then $\pi_{l'}$ is also a morphism given by $|2P_1+ 2P_i|$ for some $P_i \in \{ P_2, \dots P_6\}$. Therefore, the number of $K_4$-lines is at most $5$. 
	
	We assume that $\Ker \varphi \ne 1$. Then, $\Ker \varphi \cong C_3$, since $\Ker \varphi \subset \{ \sigma \in \Aut(C) \mid  g^1_3 = g^1_3 \circ \sigma \} \cong 1$ or $C_3$. Note that the trigonal morphism $g^1_3$ is Galois. 
	\[
		\xymatrix{
  			C \ar[r]^{\sigma} \ar[d]_{g^1_3}^{\hspace{3.3ex} \circlearrowleft} & C \ar[d]^{g^1_3} \\
  			\mathbb{P}^1 \ar[r]^{\mathrm{id}} & \mathbb{P}^1
		}
	\]
	
	\begin{claim} \label{claim:varphi(G_l)-cong-K_4}
		Let $l$ be a $K_4$-line. Then $\varphi(G_l) \cong K_4$.  
		If $l'$ is another $K_4$-line, then $\varphi(G_l) \neq \varphi(G_{l'})$.
	\end{claim}
	
	\begin{claimproof}  
		Since $\Ker \varphi \cong C_3$ and $G_l \cong K_4$, we have $\Ker \varphi \, \cap\,  G_l = 1$. Hence $\varphi(G_l) \cong K_4$.  Suppose, for contradiction, that $\varphi(G_l) = \varphi(G_{l'}) \subset \Aut(\mathbb{P}^1)$.  Then there exist involutions $\sigma_1 \in G_l$ and $\sigma_2 \in G_{l'}$ such that $\sigma_1 \neq e$, $\sigma_2 \neq e$, $\sigma_1 \neq \sigma_2$, and $\varphi(\sigma_1) = \varphi(\sigma_2)$. Let $\eta$ be a generator of $\Ker \varphi \cong C_3$. Then $\sigma_1 \sigma_2 (= \sigma_1 \sigma_2^{-1} ) = \eta$ or $\eta^2$. We may assume that $\sigma_1 \sigma_2 = \eta$. Since $\eta (\sigma_2 \sigma_1) = \sigma_1 \sigma_2 \sigma_2 \sigma_1 = e$, we have $\sigma_2 \sigma_1 = \eta^{-1} = \eta^2$. Moreover,
		\[
			\sigma_1 \eta = \eta^2 \sigma_1.
		\]
		Indeed,
		$
			\sigma_1 \eta = \sigma_1 \sigma_1 \sigma_2 = \sigma_2 = \sigma_2 \sigma_1 \sigma_1 = \eta^2 \sigma_1.
		$
		By taking a projective transformation, we may diagonalize $\eta$. Then
		\begin{equation} \label{eq:diagonal-representation-of-eta}
			\eta =
			\begin{pmatrix}
				1 & 0 & 0 & 0 \\ 0 & 1 & 0 & 0 \\ 0 & 0 & \omega & 0 \\ 0 & 0 & 0 & \omega
			\end{pmatrix}, \quad
			\begin{pmatrix}
				1 & 0 & 0 & 0 \\ 0 & 1 & 0 & 0 \\ 0 & 0 & \omega & 0\\ 0 & 0 & 0 & \omega^2
			\end{pmatrix},\ \text{or}\ 
			\begin{pmatrix}
				1 & 0 & 0 & 0\\ 0 & 1 & 0 & 0 \\ 0 & 0 & 1 & 0 \\ 0 & 0 & 0 & \omega
			\end{pmatrix},
		\end{equation}
		where $\omega$ is a primitive cubic root of unity. The cyclic morphism $g^1_3$ has six ramification points.  By Proposition~\ref{prop:well-known-facts}, the trigonal morphism $g^1_3$ is given by the projection from the vertex of the quadric cone $Q=0$. No three of the six ramification points are collinear. Considering the fixed locus $\Fix(\eta) = \{ P \in \mathbb{P}^3 \mid \eta(P) = P \}$, we see that $\eta$ cannot be the first two matrices in~\eqref{eq:diagonal-representation-of-eta}. Thus, $\eta$ must be the third one in~\eqref{eq:diagonal-representation-of-eta}, and the six ramification points of $g^1_3$ lie on the plane $W=0$. The vertex of $Q=0$ is $(0:0:0:1)$. Since $Q=0$ is the unique quadric surface containing $C$, the automorphism $\sigma_1$ also fixes the vertex and acts on the set of six ramification points. Therefore, $\sigma_1((0:0:0:1)) = (0:0:0:1)$ and $\sigma_1(\{W=0\}) = \{W=0\}$; that is,
		\[
			\sigma_1 =
			\begin{pmatrix}
				* & * & * & 0 \\ * & * & * & 0 \\ * & * & * & 0 \\ 0 & 0 & 0 & *
			\end{pmatrix}.
		\]
		Now we have $\sigma_1 \eta = \eta \sigma_1$, which contradicts that $\sigma_1 \eta = \eta^2 \sigma_1$ and $\eta \neq e$. This completes the proof of Claim~\ref{claim:varphi(G_l)-cong-K_4}.
	\end{claimproof}
	
	The finite group $\operatorname{Im}\varphi \subset \operatorname{Aut}(\mathbb{P}^1)$ is isomorphic to one of $C_m$, $D_m$, $A_4$, $S_4$, or $A_5$. 
	We first note that $\operatorname{Im}\varphi \not\cong A_5$. Indeed, if $\operatorname{Im}\varphi \cong A_5$, then $|G| = 180$. This contradicts the Hurwitz bound, since the restrictions arising in the proof of the Hurwitz theorem near the maximal bound exclude the value $180$.
	
	By counting the number of $K_4$ subgroups in the groups $C_m$, $D_m$, $A_4$, and $S_4$, and using Claim~\ref{claim:varphi(G_l)-cong-K_4}, we find that the number of $K_4$-lines is at most four, except in the case where $\operatorname{Im}\varphi \cong D_m$ with $m$ even. 

	Henceforth, we assume that $\operatorname{Im}\varphi \cong D_m$ and that $m$ is even. Let $\img \varphi = \langle \sigma, \tau \rangle$, where $\sigma^m=e$, $\tau^2=e$, and $\sigma\tau = \tau \sigma^{m-1}$. All $K_4$ subgroups in $\img \varphi$ are  
	\[ 
		\{e, \sigma^{m/2}, \tau \sigma^l, \tau \sigma^{l+m/2}\}, 
	\]
	where $l=0, \dots, m/2-1$. Note that the element $\sigma^{m/2}$ is contained in every $K_4$ subgroup. By the argument used in the proof of Claim~\ref{claim:varphi(G_l)-cong-K_4}, we see that there exists a common involution $\sigma$ contained in all $G_l$, where $l$ is a $K_4$-line. Let $l$ and $l'$ be two distinct $K_4$-lines. Both projections $\pi_l$ and $\pi_{l'}$ factor through the quotient morphism $q: C \rightarrow C/\langle \sigma \rangle$. 
	\[
  		\xymatrix{
    		& C \ar[ld]_{\pi_{l}} \ar[d]^q \ar[rd]^{\pi_{l'}} & \\
    	\mathbb{P}^1 & C/\langle \sigma \rangle \ar[l] \ar[r] & \mathbb{P}^1
  		}
	\]
	By the proof of Lemma~\ref{lemma:fixed-points-of-trace-0-involution}, the genus of $C/\langle \sigma \rangle$ is $1$. The quotient morphism $q$ has six ramification points, denoted by $P_1, \dots, P_6$. Let $P_1$ and $P_2$ be two of these six ramification points such that $\pi_{l}(P_1) = \pi_{l}(P_2)$; namely, $2P_1 + 2P_2$ is a fiber of $\pi_l$. By the Riemann–Roch theorem, we have $\dim_k H^0(C, \mathcal{O}_C(2P_1 + 2P_2)) = 2$. Hence $\pi_l = \Phi_{|2P_1 + 2P_2|}$. For $l'$, there exists a ramification point $P_i$ of $q$ such that $\pi_{l'} = \Phi_{|2P_1 + 2P_i|}$. Therefore, the number of $K_4$-lines is at most five.
\end{proof}

%%%%%
\subsection{The number of $K_4$-lines when $\rank Q=4$}

Assume that $\rank Q=4$, so that $C$ admits exactly two trigonal pencils $g^1_3$ and $h^1_3$. Let $l$ be a $K_4$-line. By Proposition~\ref{prop:generator-of-Gal-K4-line}, after composing with suitable projective transformations, we may assume that $l:X=Y=0$ and that the group $G_l$ is given by the standard representation \eqref{eq:standard-representation-K4}. Let
\[
	\sigma=
	\begin{pmatrix}
		1 & 0 & 0 & 0\\
		0 & 1 & 0 & 0\\
		0 & 0 & -1 & 0\\
		0 & 0 & 0 & 1
	\end{pmatrix},
	\qquad
	\tau=
	\begin{pmatrix}
		1 & 0 & 0 & 0\\
		0 & 1 & 0 & 0\\
		0 & 0 & 1 & 0\\
		0 & 0 & 0 & -1
	\end{pmatrix}.
\]

Note that the product $\sigma\tau$ is an involution of trace $0$ contained in $G_l$.

\begin{lemma}\label{lemma:actions-for-trigonal-maps}
	The following relations hold:
	\[
		g^1_3\circ\sigma\simeq h^1_3,\quad
		h^1_3\circ\sigma\simeq g^1_3,\quad
		g^1_3\circ\tau\simeq h^1_3,\quad
		h^1_3\circ\tau\simeq g^1_3,
	\]
	and
	\[
		g^1_3\circ(\sigma\tau)\simeq g^1_3,\quad
		h^1_3\circ(\sigma\tau)\simeq h^1_3.
	\]
\end{lemma}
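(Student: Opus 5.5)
Since $\rank Q=4$, every automorphism of $C$ permutes the two trigonal pencils $g^1_3$ and $h^1_3$. For $\rho\in\{\sigma,\tau,\sigma\tau\}$ it therefore suffices to decide whether $\rho$ fixes or swaps the two rulings of the smooth quadric $\{Q=0\}\cong\mathbb P^1\times\mathbb P^1$. The trigonal pencils are the projections to the two factors (Proposition~\ref{prop:well-known-facts}). The projective transformation induced by $\rho$ preserves $\{Q=0\}$, because this is the unique quadric containing $C$, and so it maps each ruling to a ruling. I will compute $Q$ explicitly in the normalized coordinates and read off the action on the rulings.

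Step 1: normalize $Q$. By the previous step, $\rho^*Q=c_\rho Q$ for $\rho=\sigma,\tau$. Write $Q$ in the variables $X,Y,Z,W$. Invariance up to scalar under $\sigma$ (which sends $Z\mapsto -Z$) and under $\tau$ (which sends $W\mapsto -W$) forces $Q$ to be a sum of monomials lying in a single eigenspace for each of the two sign characters. These eigenspaces are spanned by:
\begin{itemize}
\item $q(X,Y)$, $Z^2$, $W^2$ for $c_\sigma=c_\tau=1$;
\item $ZW$ alone for $c_\sigma=c_\tau=-1$;
\item $XZ$, $YZ$ for $(c_\sigma,c_\tau)=(-1,1)$;
\item $XW$, $YW$ for $(c_\sigma,c_\tau)=(1,-1)$.
\end{itemize}
Only the first eigenspace can contain a rank-$4$ form. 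Hence
\[
Q=q(X,Y)+\alpha Z^2+\beta W^2,
\]
with $q$ a nondegenerate binary quadratic form and $\alpha\beta\neq0$. After a linear change in $X,Y$ (which keeps $l$ and the matrices fixed) and rescaling $Z,W$, I may take $Q=XY-Z^2+W^2$, or equivalently $Q=X'Y'-Z'W'$ with $X'=X$, $Y'=Y$, $Z'=Z-W$, $W'=Z+W$.

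Step 2: identify the rulings in these coordinates. The two families of lines on $X'Y'=Z'W'$ are
\[
\{X'=sZ',\ W'=sY'\}\quad\text{and}\quad\{X'=tW',\ Z'=tY'\}.
\]
Now $\sigma$ swaps $Z$ and $-Z$, so it sends $Z'\mapsto -W'$ and $W'\mapsto -Z'$, while fixing $X',Y'$. Substituting into the first family gives $X'=-sW'$, $Z'=-sY'$, which is a line of the second family. So $\sigma$ swaps the rulings. Similarly $\tau$ sends $Z'\mapsto W'$ and $W'\mapsto Z'$, and it also swaps them. Consequently $\sigma\tau$ preserves each ruling. Translating back, since the pencils are the projections to the factors, $g^1_3\circ\sigma\simeq h^1_3$, and likewise for the other relations in the statement.

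The main obstacle is Step 1: excluding the other eigenspace patterns for $Q$ cleanly, and checking that the coordinate change does not disturb the normal form \eqref{eq:standard-representation-K4}. Only changes in the $X,Y$ variables together with scalings of $Z,W$ are used, and these commute with $\sigma$ and $\tau$, so that check goes through. Alternatively, one can argue via determinants. An automorphism of a smooth quadric swaps the rulings if and only if its induced linear map, normalized so that $\rho^*Q=Q$, has determinant $-1$. Here $\det\sigma=\det\tau=-1$ and $\det\sigma\tau=1$, with $c=1$ as found in Step 1. I would include this as a quick consistency check.
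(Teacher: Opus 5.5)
Your proposal is correct and follows essentially the paper's route. Both arguments use the $\sigma,\tau$-invariance of $V(Q)$ to normalize $Q$ to $XY-(Z-dW)(Z+dW)$ (your $XY-Z^2+W^2$ is the case $d=1$), and then check directly that $\sigma$ and $\tau$ interchange the two rulings while $\sigma\tau$ preserves them; the paper reads this off from the explicit projections $(X:Z\mp dW)$, whereas you track the images of the ruling lines, and your eigenspace decomposition in Step 1 supplies the justification for the normal form of $Q$ that the paper only asserts.
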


\begin{proof}
	By Proposition~\ref{prop:well-known-facts}, the quadric $V(Q)$ is the unique quadric containing $C$. Since $\sigma$ and $\tau$ preserve $V(Q)$, the quadratic form $Q$ can be written as
	\[
		Q=A(X,Y)B(X,Y)-(Z-dW)(Z+dW)
	\]
	for some linear forms $A,B$ and $d\in k$.
	As $\rank Q=4$, after suitable projective transformations preserving the line $l:X=Y=0$ and the representation of $G_l$, we may assume that
	\[
		Q=XY-(Z-dW)(Z+dW).
	\]

	Via the Segre embedding and a linear change of coordinates, we obtain an isomorphism $V(Q)\cong\mathbb P^1\times\mathbb P^1$.
	Under this identification, the morphisms $g^1_3$ and $h^1_3$ are given by the two projections, and may be written as
	\[
		g^1_3:(X:Y:Z:W)\longmapsto (X:Z-dW),\qquad
		h^1_3:(X:Y:Z:W)\longmapsto (X:Z+dW).
	\]
	The assertions follow by a direct computation.
\end{proof}

Let $H$ be the subgroup of $\Aut(C)$ generated by all involutions of trace $0$ contained in $G_l$ for some $K_4$-line $l$. By Lemma~\ref{lemma:actions-for-trigonal-maps}, every element of $H$ preserves both trigonal pencils $g^1_3$ and $h^1_3$ up to equivalence.
Applying Lemma~\ref{lemma:action-on-trigonal} to each pencil, we obtain homomorphisms
\[
	\varphi_g,\varphi_h:H\longrightarrow\Aut(\mathbb P^1)
\]
and the associated short exact sequences
\begin{equation}\label{eq:exact-seq-of-H}
	\begin{aligned}
		1&\longrightarrow\Ker\varphi_g\longrightarrow H
		\xrightarrow{\ \varphi_g\ }\img\varphi_g\longrightarrow 1,\\
		1&\longrightarrow\Ker\varphi_h\longrightarrow H
		\xrightarrow{\ \varphi_h\ }\img\varphi_h\longrightarrow 1.
	\end{aligned}
\end{equation}

\begin{proposition}
	Assume that $\rank Q = 4$, and one of $\Ker \varphi_g$ and $\Ker \varphi_h$ of the short exact sequence \eqref{eq:exact-seq-of-H} is trivial. Then, the number of $K_4$-lines is at most $15$.  
\end{proposition}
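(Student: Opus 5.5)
Without loss of generality, assume $\Ker\varphi_g=1$. Then $H\cong\img\varphi_g$ is a finite subgroup of $\Aut(\mathbb P^1)$, so by the classification it is isomorphic to $C_m$, $D_m$, $A_4$, $S_4$, or $A_5$. By Proposition~\ref{prop:two-G_ls-belonging-to-K_4-lines}, distinct $K_4$-lines $l\neq l'$ have distinct trace-zero involutions $\sigma_l\neq\sigma_{l'}$. Also, $G_l$ contains exactly one involution of trace $0$, namely $\sigma\tau$ in the standard form \eqref{eq:standard-representation-K4}. Hence $l\mapsto\sigma_l$ is an injection from the set of $K_4$-lines into the set of involutions of $H$. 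It therefore suffices to bound the number of involutions in $H$ by $15$, or to handle directly the cases where that count could be larger.

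The involution counts are: $C_m$ has at most $1$, $A_4$ has $3$, $S_4$ has $9$, and $A_5$ has $15$. In all these cases the number of $K_4$-lines is at most $15$. The only remaining case is $H\cong D_m$, which has $m$ or $m+1$ involutions, and here I need a separate argument.

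\textbf{The dihedral case (main obstacle).} The plan is to reuse the geometric argument from the $\rank Q=3$ proposition. Each $\sigma_l$ has exactly two fixed points on $C$ by Lemma~\ref{lemma:fixed-points-of-trace-0-involution}, so $C/\langle\sigma_l\rangle$ has genus $2$.

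First I would show that reflections in $H\cong D_m$ cannot produce too many lines. For two trace-zero involutions $\sigma_l,\sigma_{l'}$, their product is a rotation. The point is that $\sigma_l$ and $\sigma_{l'}$ both lie in the cyclic-by-involution structure of $H$, and $H$ acts faithfully on the base $\mathbb P^1$ of $g^1_3$. An involution of $\mathbb P^1$ has two fixed points. Since $\sigma_l$ has only two fixed points on $C$, the fixed points of $\varphi_g(\sigma_l)$ on $\mathbb P^1$ have fibers on which $\sigma_l$ acts with exactly one fixed point each (a degree-$3$ fiber under an involution). So each fixed point of $\varphi_g(\sigma_l)$ accounts for one of the two fixed points of $\sigma_l$ on $C$.

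Next I would use Lemma~\ref{lemma:actions-for-trigonal-maps}: $\sigma$ and $\tau$ in $G_l$ swap $g^1_3$ and $h^1_3$. This means $G_l$ normalizes $H$ and conjugates $\varphi_g$ into $\varphi_h$, giving an extra automorphism of $H$. In a dihedral group $D_m$ with large $m$, all reflections $\sigma_{l'}$ would then have to be normalized compatibly. My first attempt would be to show the rotation subgroup $\langle\rho\rangle$ is normalized by each $\tau_l$, and then to derive a contradiction from Lemma~\ref{lemma:representation-when-l:X=Y=0}-type matrix computations unless $m\le 6$.

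A fallback, if that fails, is a Hurwitz or Riemann--Hurwitz bound. The group $\langle H, G_l\rangle$ acts on $C$ of genus $4$, so $|\Aut(C)|\le 84\cdot 3=252$, and the Riemann--Hurwitz formula for $D_m$ acting with genus-$2$ quotients by reflections should bound $m$ well below $15$. For instance, a rotation of order $m$ acting on genus $4$ needs $m\le 18$, and combined with the fixed-point data this should force at most $15$ reflections.
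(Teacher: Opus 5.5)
\textbf{There is a genuine gap: the dihedral case is not proved.} Your reduction matches the paper. With $\Ker\varphi_g=1$, the map $l\mapsto\sigma_l$ injects $K_4$-lines into the involutions of $H\cong\img\varphi_g$, and counting involutions settles $C_m$, $A_4$, $S_4$ and $A_5$. The case $H\cong D_m$, however, is only sketched, and none of your routes closes it.
\begin{itemize}
\item The Hurwitz bound gives only $2m\le 252$.
\item The bound $m\le 4g+2=18$ on the order of an automorphism still allows $D_{16}$, $D_{17}$ and $D_{18}$. These have $17$, $17$ and $19$ involutions, which exceeds $15$.
\item It is true that $G_l$ normalizes $H$ and interchanges $\varphi_g$ and $\varphi_h$. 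But you give no computation that turns this into $m\le 6$.
\item The rank-$3$ argument does not transfer. The nonzero-trace involutions of $G_l$ swap $g^1_3$ and $h^1_3$, so they do not lie in $H$. Hence you do not have $K_4$-subgroups of $D_m$ sharing a central involution to exploit.
\end{itemize}
Your remark that the two fixed points of $\sigma_l$ lie one over each fixed point of $\varphi_g(\sigma_l)$ is correct, but it does not bound $m$.

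The missing idea is to use an $H$-invariant finite subset of the base of $g^1_3$, namely its branch locus $B$.
\begin{itemize}
\item Since $g^1_3\circ\alpha=\varphi_g(\alpha)\circ g^1_3$ for $\alpha\in H$, the group $\varphi_g(H)$ preserves $B$.
\item Riemann--Hurwitz gives total ramification $2\cdot 4-2+2\cdot 3=12$, so $6\le\#B\le 12$.
\item Let $\rho$ generate the rotation subgroup of $D_m$. Since $\varphi_g$ is injective, $\varphi_g(\rho)$ has order $m$ and exactly two fixed points on $\mathbb{P}^1$.
\item Because $\#B\ge 6$, some $P\in B$ is not fixed by $\varphi_g(\rho)$.
\item Every nontrivial power of $\varphi_g(\rho)$ fixes only those same two points, since a Möbius map fixing three points is the identity. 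So the orbit of $P$ has exactly $m$ elements, all lying in $B$.
\end{itemize}
Hence $m\le 12$, so $D_m$ has at most $13$ involutions and there are at most $13$ $K_4$-lines in this case. This is how the paper finishes.
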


\begin{proof}
	We may assume that $\Ker \varphi_g=1$. Let $l_i$ ($i=1,2, \dots $) be mutually distinct $K_4$-lines, and $\sigma_i \in G_{l_i}$ ($i=1,2, \dots $) be elements with trace $0$. Using Proposition~\ref{prop:two-G_ls-belonging-to-K_4-lines} and Lemma~\ref{lemma:actions-for-trigonal-maps}, we see that $\sigma_1, \sigma_2, \dots$ are mutually distinct, and all belong to $H$. 
	By the assumptions, $H \cong \img \varphi \cong C_m$, $D_m$, $A_4$, $S_4$, or $A_5$. The number of elements of order $2$ in $C_m$ (resp. $D_m, A_4, S_4, A_5$) is at most $1$ (resp. at most $m$ or $m+1$, equal to  $3$, equal to $9$, equal to $15$). Hence, if $H \not \cong D_m$, the number of $K_4$-lines is at most $15$.

	Assume that $H \cong D_m$ and $H = \langle \sigma, \tau \rangle$, where $\sigma$ has order $m$, $\tau$ has order $2$, and $\sigma \tau = \tau \sigma^{-1}$. We show that $m \le 12$. Since $g^1_3 \circ \alpha = \varphi(\alpha) \circ g^1_3$ for any $\alpha \in H$, the group $H$ acts on the branch locus of $g^1_3$. The automorphism
	$
	\varphi(\sigma) \colon \mathbb{P}^1 \to \mathbb{P}^1
	$
	has exactly two fixed points, say $Q_1$ and $Q_2$. Since $g^1_3$ has at least six branch points, there exists a point $P \in \mathbb{P}^1$ such that $P$ is a branch point of $g^1_3$ and $\varphi(\sigma)(P) \ne P$.
	Consider the points $\varphi(\sigma)^i(P)$ for $i = 1, \dots, m$. Since an automorphism of $\mathbb{P}^1$ is determined by the images of three points, namely the images of $Q_1$, $Q_2$, and $P$, if $\varphi(\sigma)^i(P) = P$, then $\varphi(\sigma)^i = \mathrm{id}$. Since $g^1_3$ has at most twelve branch points, the number of distinct points among $\varphi(\sigma)^i(P)$ ($i = 1, \dots, m$) is at most twelve. Hence the order of $\varphi(\sigma)$, that is, $m$, is at most $12$. Since the number of involutions in $H \cong D_m$ with $m \le 12$ is at most $13$, the number of $K_4$-lines is also at most $13$.
	\end{proof}

\begin{proposition} \label{prop:two_trigonal_are_Gal_K4Lines}
	Assume that $\rank Q = 4$, both $\Ker \varphi_g$ and $\Ker \varphi_h$ of the short exact sequence \eqref{eq:exact-seq-of-H} are non-trivial. Then, the number of $K_4$-lines is at most $9$.  
\end{proposition}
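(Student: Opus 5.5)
We would follow the pattern of the proof of Proposition~\ref{prop:two_trigonal_are_Gal_S3Lines}. The plan is to pass to an explicit normal form of $C$, show that every element of $H$ is a very restricted matrix, and then count the involutions of trace $0$. By Proposition~\ref{prop:two-G_ls-belonging-to-K_4-lines}, distinct $K_4$-lines give distinct trace-$0$ involutions, all lying in $H$. It therefore suffices to show that $H$ contains at most $9$ involutions of trace $0$.

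\emph{Step 1: normal form and the kernels.} Since $\Ker\varphi_g\neq 1$ and $\Ker\varphi_h\neq1$, both $g^1_3$ and $h^1_3$ are Galois, exactly as in Proposition~\ref{prop:two_trigonal_are_Gal_S3Lines}. By Corollary~3.3 in \cite{KomedaTakahashi2021}, we may take
\[
Q=XW-YZ,\qquad F=Z(W-Z)(W+Z)-Y^3-cXY^2+9X^2Y+cX^3 .
\]
Then $\Ker\varphi_g=\langle\eta_g\rangle$ and $\Ker\varphi_h=\langle\eta_h\rangle$, and we use the lines $l_1,\dots,l_4$ with $\Fix(\eta_g)=l_1\cup l_2$ and $\Fix(\eta_h)=l_3\cup l_4$. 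Each kernel is normal in $H$, so every $\alpha\in H$ satisfies $\alpha\eta_g\alpha^{-1}\in\{\eta_g,\eta_g^{2}\}$, and similarly for $\eta_h$. Hence $\alpha$ preserves $l_1\cup l_2$ and $l_3\cup l_4$. Unlike the $S_3$ case, where the order was $3$, an involution may now swap $l_1$ and $l_2$. This happens precisely when it inverts $\eta_g$.

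\emph{Step 2: matrix shape.} There are two cases.

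If $\alpha$ fixes $l_1$ and $l_2$, then, as in the $S_3$ case, $\alpha=\begin{pmatrix}\lambda B&O\\O&B\end{pmatrix}$. Here $B$ is one of the six matrices permuting $C\cap l_1$, and the compatibility with $C\cap l_2$, $C\cap l_3$ and $C\cap l_4$ restricts the options, with the subcases $c\neq0$ and $c=0$.

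If $\alpha$ swaps $l_1$ and $l_2$, then $\alpha=\begin{pmatrix}O&A\\B&O\end{pmatrix}$. Invariance of $Q=XW-YZ$ forces $A$ to be determined by $B$ up to a scalar, namely $A=\mu\,\mathrm{adj}(B)$-type. The condition that $\alpha$ maps $C\cap l_1=\{(0:0:0:1),(0:0:\pm1:1)\}$ onto $C\cap l_2$ then again leaves finitely many choices, depending on $c$.

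\emph{Step 3: counting.} An involution of trace $0$ of block-diagonal type must have $\lambda=\pm1$ and $B$ with eigenvalues $\pm1$ up to scalar. Involutions of antidiagonal type automatically have trace $0$. From the explicit lists we collect the admissible trace-$0$ involutions, using also that each one must fix exactly two points of $C$ and that its fixed lines must be disjoint from $C$ (Lemma~\ref{lemma:fixed-points-of-trace-0-involution}). We then check that there are at most $9$ of them in both the case $c\neq0$ and the case $c=0$.

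We expect the main obstacle to be the swapping case in Steps~2 and~3. There we must determine exactly which antidiagonal matrices carry $C\cap l_1$ to $C\cap l_2$ and $C\cap l_3$ to $C\cap l_3$ or $C\cap l_4$, and these conditions depend on $c$, in particular through $\beta^3=c\pm3\sqrt{-3}$. We would handle this by first fixing the image of $l_3$, which is forced by how $\alpha$ conjugates $\eta_h$. This reduces the unknown $2\times2$ block to one of the six $B$'s up to scalar, and the scalar is then pinned down by $C\cap l_3$.
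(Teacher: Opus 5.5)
Your reduction to counting trace-$0$ involutions in $H$ is fine, but Step~3 has a genuine gap, and it fails when $c=0$.

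\textbf{The gap.} Nothing in your argument separates the trace-$0$ involution of a $K_4$-line from an arbitrary trace-$0$ involution normalizing $\langle\eta_g\rangle$ and $\langle\eta_h\rangle$. Since $H$ is known only through its generators, which you have not determined, you are in effect counting in this larger group. For $c=0$ that count exceeds $9$.

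Take $\rho_0=\operatorname{diag}(1,-1,-1,1)$. It satisfies $\rho_0^*Q=Q$ and $\rho_0^*F=-F$, so it is a trace-$0$ involution of $C$. It is of your block-diagonal type, with $\lambda=-1$ and $B=\operatorname{diag}(-1,1)$. It fixes $l_1$ and $l_2$ and swaps $l_3$ and $l_4$. Its fixed locus is $\{Y=Z=0\}$, which meets $C$ exactly in $(1:0:0:0)$ and $(0:0:0:1)$, together with $\{X=W=0\}$, which is disjoint from $C$.

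So $\rho_0$ passes every filter you propose from Lemma~\ref{lemma:fixed-points-of-trace-0-involution}, and so do its two conjugates under $\eta_h$. Adding these to the nine antidiagonal involutions ($d\in\{0,\pm3\}$, $\lambda^3=27$) already gives $12$ candidates. For $c=0$ there are also antidiagonal involutions with block $\begin{pmatrix}0&1\\-3&0\end{pmatrix}$ and $\lambda^3=-27$; these fix $l_3$ and $l_4$. For $c\neq0$ none of these extra involutions exist, so the failure is confined to $c=0$.

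\textbf{The missing idea.} You need the other two elements of $G_l$ to show that the trace-$0$ involution $\rho\in G_l$ interchanges $l_1\leftrightarrow l_2$ and $l_3\leftrightarrow l_4$.
\begin{enumerate}
\item Suppose $\rho(l_1)=l_1$. Then $\rho$ fixes some $P\in C\cap l_1$ and some $Q\in C\cap l_2$.
\item By Lemma~\ref{lemma:fixed-points-of-trace-0-involution}, these are its only fixed points on $C$. Hence the fibre of $\pi_l$ through $P$ is $2P+2Q$, and a nonzero-trace involution $\sigma\in G_l$ sends $P$ to $Q$.
\item By Lemma~\ref{lemma:actions-for-trigonal-maps}, $\sigma$ interchanges $g^1_3$ and $h^1_3$. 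So it carries $C\cap(l_1\cup l_2)$ onto $C\cap(l_3\cup l_4)$, which contradicts $Q\in l_2$.
\end{enumerate}
The same argument handles $l_3$.

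\textbf{Completing the count.} With this claim only antidiagonal $\rho$ remain. The conditions $\rho(l_1\cap l_3)=l_2\cap l_4$ and $\rho(l_1\cap l_4)=l_2\cap l_3$ force $A=\begin{pmatrix}-d&1\\3&d\end{pmatrix}$. Then $\rho(C\cap l_1)=C\cap l_2$ gives $d^3+cd^2-9d-c=0$, and $\rho^*F=\mu F$ gives $\lambda^3=c^2+27$. This yields at most $9$ such involutions, hence at most $9$ lines by Proposition~\ref{prop:two-G_ls-belonging-to-K_4-lines}.

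A smaller point: for an antidiagonal matrix $\begin{pmatrix}O&A\\B&O\end{pmatrix}$, invariance of $XW-YZ$ forces $B=\lambda A$, not $A\propto\operatorname{adj}(B)$.
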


\begin{proof}
	For a $\sigma \in \Ker \varphi_g$, $g^1_3 \circ \sigma = g^1_3$. By the assumption $\Ker \varphi_g \ne 1$, we have that $g^1_3$ is Galois. Similarly, $h^1_3$ is also Galois. 
	From \cite[Theorem 1.2]{KomedaTakahashi2021}, we may assume that $I(C) = (Q, F)$ where 
	\begin{equation}
		Q=XW-YZ \text{ and } F=Z(W-Z)(W+Z) - (Y^3+cXY^2-9X^2Y-cX^3),
	\end{equation}
	($c \in k$). The Galois groups of $g^1_3$ and $h^1_3$ are $\Gal(g^1_3):=\{ \sigma \in \Aut(C) \mid g^1_3 \circ \sigma =g^1_3  \} = \langle \eta_g  \rangle$ and $\Gal(h^1_3):=\{ \sigma \in \Aut(C) \mid h^1_3 \circ \sigma =h^1_3  \} = \langle \eta_h  \rangle$, respectively, where
	\begin{equation}
		\eta_g= 
			\begin{pmatrix}
				1 & 0 & 0 & 0 \\
				0 & 1 & 0 & 0 \\
				0 & 0 & \omega & 0 \\
				0 & 0 & 0 & \omega \\	
			\end{pmatrix} , \hspace{2ex}
		\eta_h= 
			\begin{pmatrix}
				1 & 1 & 0 & 0 \\
				-3 & 1 & 0 & 0 \\
				0 & 0 & 1 & 1 \\
				0 & 0 & -3 & 1 \\	
			\end{pmatrix},
	\end{equation}
	and $\omega$ is a primitive cubic root of unity. The fixed loci of $\eta_g$ and $\eta_h$ are 
	$\Fix(\eta_g) := \{ P \in \mathbb{P}^3 \mid \eta_g (P) = P \} = l_1 \cup l_2$, and
	$\Fix(\eta_h) := \{ P \in \mathbb{P}^3 \mid \eta_h (P) = P \} = l_3 \cup l_4$, where
	$l_1: X=Y=0$, $l_2:Z=W=0$, $l_3: -\sqrt{-3}X+Y= -\sqrt{-3}Z+W =0$, and $l_4: \sqrt{-3}X+Y= \sqrt{-3}Z+W =0$.
	Note that $\# (C \cap l_i ) =3$ ($i=1,2,3,4$) and $C \cap l_i \cap l_j = \emptyset$ if $i \ne j$. 
	
	\begin{claim} \label{claim:action-of-trace-0-involution-on-lines}
		For a $\rho \in G_l$ with trace $0$, where $l$ is a $K_4$-line, we have that $\rho(l_1)= l_2, \rho(l_2)= l_1$, $\rho(l_3)=l_4$, and $\rho(l_4)=l_3$. 
	\end{claim}
	\begin{claimproof}
		Since $g^1_3 \circ \rho \simeq g^1_3$, and the set of ramification points of $g^1_3$ is  $C \cap (l_1 \cup l_2)$, we have that $\rho(C \cap (l_1 \cup l_2)) = C \cap (l_1 \cap l_2)$. So,  $\rho(l_1 \cup l_2) = l_1 \cap l_2$. Similarly, $\rho(l_3 \cup l_4) = l_3 \cap l_4$. 
		Assume  $\rho(l_1) = l_1$. Then, $\rho(l_2) = l_2$. Since $\rho$ is an involution, there exist points $P \in C \cap l_1$ and $Q \in C \cap l_2$ such that $\rho(P) = P$ and $\rho(Q) = Q$. By Lemma~\ref{lemma:fixed-points-of-trace-0-involution}, $\Fix(\rho) \cap C = \{P ,Q\}$. Considering the decomposition of $\pi_l$ into $C  \rightarrow C/\langle \rho \rangle \rightarrow C/{G_l} \cong \mathbb{P}^1$, we see that $\pi_l(P) = \pi_l(Q)$.   Let $\tau$ and $\sigma$ be involutions with nonzero trace in $G_l$, that is $G_l = \{e, \sigma, \tau, \rho=\sigma\tau \}$. Since the Galois group $G_l$ acts on a fiber of $\pi_l$ transitively, we see that $\sigma(P) = Q$. By Lemma~\ref{lemma:actions-for-trigonal-maps}, $g^1_3 \circ \sigma \simeq h^1_3$ and $h^1_3 \circ \sigma  \simeq g^1_3$. Hence, $\sigma(l_1 \cup l_2) = l_3 \cup l_4$. We have $\sigma(P) = Q \in l_3 \cup l_4$. This contradicts that $Q \in l_2$ and $l_2 \cap (l_3 \cup l_4) = \emptyset$. We conclude that $\rho (l_1) = l_2$. Similarly, we have $\rho(l_2)= l_1$, $\rho(l_3)=l_4$, and $\rho(l_4)=l_3$. Now we conclude Claim~\ref{claim:action-of-trace-0-involution-on-lines}.
	\end{claimproof}

	Let $l$ be a $K_4$-line and let $\rho \in G_l$ be the involution of trace $0$. By $\rho (l_1) = l_2$ and $\rho (l_2)=l_1$, $\rho$ must be 
	\[ \rho=
		\begin{pmatrix}
		O & A \\
		B & O	
		\end{pmatrix},
	\]
	where $A$ and $B$ are some matrices of size $2 \times 2$ and $O$ is the zero matrix of size $2 \times 2$. Since $\rho V(Q) = V(Q)$, $B = \lambda A$ for some $\lambda$. Since $\rho(l_1 \cap l_3) = l_2 \cap l_4$ and $\rho(l_1 \cap l_4) = l_2 \cap l_3$  the $A$ must be 
	\[ A=
		\begin{pmatrix}
		-d & 1 \\
		3 & d	
		\end{pmatrix},
	\]
	where $d \in k$. Since $\rho(l_1 \cap C) = l_2 \cap C$, we have $d^3+c d^2 -9d -c =0$. Since $\rho^*(F) \in I(C)=(Q, F)$, 
	$\rho^*(F) = GQ + \mu F, $
	where $G$ is a homogeneous polynomial of degree $1$ and $\mu \in k$. From the computation of $\rho^*F$, we see that there is no nonzero term containing the factor $XW$. Hence $G$ must be zero. From $\rho^*(F) = \mu F$, we have $\lambda^3 = c^2 +27$. Namely, 
	\[ \rho=
		\begin{pmatrix}
			0 & 0 & -d & 1 \\
			0 & 0 & 3 & d \\
			-\lambda d & \lambda & 0 & 0 \\
			3\lambda & \lambda d & 0 & 0
		\end{pmatrix}
	\]
	with $d^3+c d^2 -9d -c =0$ and $\lambda^3 = c^2 +27$. The number of such $\rho$'s is at most $9$. By Proposition~\ref{prop:two-G_ls-belonging-to-K_4-lines}, we conclude that the number of $K_4$-lines is at most $9$.   
\end{proof}

%%%%%%%%%%%%%%%%%%%%%%%%%%%%%%%5
\section{Example attaining maximal numbers of $S_3$-lines and $K_4$-lines}
\label{section:examples}

In this section, we present an explicit example of a canonical curve of genus~$4$ that simultaneously attains the maximal numbers of $S_3$-lines and $K_4$-lines. This example demonstrates the sharpness of the bounds established in Theorem~\ref{theorem:main}.

\subsection*{The curve}
We consider the canonical curve $C \subset \mathbb{P}^3$ defined by
\begin{equation}\label{Eq:S5-curve}
	Q = XW + YZ = 0, \qquad
	F = X^2Z - Y^2X - Z^2W + W^2Y = 0.
\end{equation}
This curve appears in the classification of Kuribayashi--Kuribayashi \cite{KuribayashiKuribayashi1986, KuribayashiKuribayashi1990} as a canonical curve of genus~$4$ admitting the maximal possible automorphism group. 
\footnote{
	In \cite{KuribayashiKuribayashi1986}, $C$ is defined by $Q = XW + YZ = 0$ and $F = X^2Z - Y^2X - \underline{\mathbf{Z^2X}} + W^2Y = 0$. The cubic has a typographical error; the corrected cubic equation used here yields the canonical genus~$4$ curve with $\Aut(C) = G(120) \cong S_5$ described in their classification.
}
In fact, $\Aut(C) \cong S_5$, a group of order~$120$, which is the maximal possible order of the automorphism group of a compact Riemann surface of genus~$4$.

\subsection*{Numbers of Galois lines}

By explicit computer-assisted computation, we find that the curve $C$ admits exactly ten distinct $S_3$-lines and fifteen distinct $K_4$-lines. Hence, this example simultaneously attains the upper bounds for the numbers of $S_3$-lines and $K_4$-lines given in Theorem~\ref{theorem:main}.

For the reader’s convenience, we list below the defining equations of all $S_3$-lines and $K_4$-lines associated with the curve~$C$. Throughout, $\zeta$ denotes a fixed primitive fifth root of unity.

\subsection*{Explicit Galois lines}

\begin{table}[htbp]
	\centering
	\caption{Ten $S_3$-lines for the curve $C$}
	\resizebox{\textwidth}{!}{
	\begin{tabular}{cllll}
		\toprule
		$S_3$-line $l$ & \multicolumn{4}{l}{The defining equations of $l$} \\
		\midrule
		$l_1$ & $(\zeta^2+\zeta^3)X + (\zeta^2+\zeta^3)Y+Z$ &=&
			$(-\zeta^2-\zeta^3)X+Y+W$ & $=0$ \\
		$l_2$ & $(-\zeta-\zeta^2-\zeta^3)X+(\zeta^3+\zeta^4)Y+Z$ &=&
			$(\zeta^2+\zeta^3+\zeta^4)X+\zeta^2Y+W$ & $=0$ \\
		$l_3$ & $(-\zeta-\zeta^3-\zeta^4)X+(\zeta^2+\zeta^4)Y+Z$ &=&
			$(\zeta+\zeta^2+\zeta^4)X+\zeta Y+W$ & $=0$ \\
		$l_4$ & $(-\zeta-\zeta^2-\zeta^4)X+(\zeta+\zeta^3)Y+Z$ &=&
      $(\zeta+\zeta^3+\zeta^4)X+\zeta^4 Y+W$ & $=0$ \\
		$l_5$ & $(-\zeta^2-\zeta^3-\zeta^4)X+(\zeta+\zeta^2)Y+Z$ &=&
      $(\zeta+\zeta^2+\zeta^3)X+\zeta^3 Y+W$ & $=0$ \\
		$l_6$ & $(\zeta+\zeta^4)X+(\zeta+\zeta^4)Y+Z$ &=&
      $(-\zeta-\zeta^4)X+Y+W$ & $=0$ \\
		$l_7$ & $(\zeta^2+\zeta^4)X+(-\zeta-\zeta^2-\zeta^4)Y+Z$ &=&
      $(-\zeta-\zeta^3)X+\zeta^3 Y+W$ & $=0$ \\
		$l_8$ & $(\zeta+\zeta^3)X+(-\zeta-\zeta^3-\zeta^4)Y+Z$ &=&
      $(-\zeta^2-\zeta^4)X+\zeta^2 Y+W$ & $=0$ \\
		$l_9$ & $(\zeta+\zeta^2)X+(-\zeta-\zeta^2-\zeta^3)Y+Z$ &=&
      $(-\zeta^3-\zeta^4)X+\zeta^4 Y + W$ & $=0$ \\
		$l_{10}$ & $(\zeta^3+\zeta^4)X+(-\zeta^2-\zeta^3-\zeta^4)Y+Z$ &=&
      $(-\zeta-\zeta^2)X+\zeta Y+W$ & $=0$ \\
		\bottomrule
	\end{tabular}
	}
\end{table}

\begin{table}[htbp]
	\centering
	\caption{Fifteen $K_4$-lines for the curve $C$}
	\begin{tabular}{cll}
		\toprule
		$K_4$-line $l$ & \multicolumn{2}{l}{The defining equations of $l$} \\
		\midrule
		$l'_1$ & $\zeta^3 Y+Z = \zeta^4 X+W$ &$=0$ \\
		$l'_2$ & $(\zeta^3+\zeta^4)X+(\zeta^2+\zeta^4)Y+Z$ & $=0$ \\
      &$(\zeta+\zeta^2+\zeta^4)X+(-2\zeta-\zeta^3-\zeta^4)Y+W$ &$=0$ \\
		$l'_3$ & $(-\zeta-\zeta^3-\zeta^4)X+(-\zeta^2-\zeta^3-\zeta^4)Y+Z$ & $=0$ \\
      &$(-\zeta-\zeta^2)X+(-\zeta+\zeta^3+\zeta^4)Y+W$ & $=0$ \\
		$l'_4$ & $(-\zeta^2-\zeta^3-\zeta^4)X+(-\zeta-\zeta^2-\zeta^4)Y+Z$ & $=0$ \\
      &$(-\zeta-\zeta^3)X+(\zeta^2-\zeta^3+\zeta^4)Y+W$ & $=0$ \\
		$l'_5$ & $(\zeta^2+\zeta^4)X+(\zeta+\zeta^2)Y+Z$ & $=0$ \\
      &$(\zeta+\zeta^2+\zeta^3)X+(-\zeta^2-2\zeta^3-\zeta^4)Y+W$ & $=0$ \\
		$l'_6$ & $\zeta^4 Y+Z = \zeta^2 X+W $ & $=0$ \\
		$l'_7$ & $(-\zeta-\zeta^2-\zeta^3)X+(-\zeta-\zeta^3-\zeta^4)Y+Z$ & $=0$ \\
      &$(-\zeta^2-\zeta^4)X+(\zeta-\zeta^2+\zeta^3)Y+W$ & $=0$ \\
		$l'_8$ & $(\zeta+\zeta^3)X+(\zeta^3+\zeta^4)Y+Z$ & $=0$ \\
      &$(\zeta^2+\zeta^3+\zeta^4)X+(-\zeta-2\zeta^2-\zeta^3)Y+W$ & $=0$ \\
		$l'_9$ & $\zeta Y+Z = \zeta^3 X+W $ & $=0$ \\
		$l'_{10}$ & $(\zeta+\zeta^2)X+(\zeta+\zeta^3)Y+Z$ & $=0$ \\
      &$(\zeta+\zeta^3+\zeta^4)X+(-\zeta-\zeta^2-2\zeta^4)Y+W$ & $=0$ \\
		$l'_{11}$ & $(-\zeta-\zeta^2-\zeta^4)X+(-\zeta-\zeta^2-\zeta^3)Y+Z$ & $=0$ \\
      &$(-\zeta^3-\zeta^4)X+(\zeta+\zeta^2-\zeta^4)Y+W$ & $=0$ \\
		$l'_{12}$ & $\zeta^2 Y+Z = \zeta X+W$ & $=0$ \\
		$l'_{13}$ & $(\zeta^2+\zeta^3)X+(\zeta+\zeta^4)Y+Z$ & $=0$ \\
      &$(-\zeta-\zeta^4)X+(2\zeta+\zeta^2+\zeta^3+2\zeta^4)Y+W$ & $=0$ \\
		$l'_{14}$ & $(\zeta+\zeta^4)X+(\zeta^2+\zeta^3)Y+Z$ & $=0$ \\
      &$(-\zeta^2-\zeta^3)X+(\zeta+2\zeta^2+2\zeta^3+\zeta^4)Y+W$ & $=0$ \\
		$l'_{15}$ & $Y+Z = X+W $ & $=0$ \\
		\bottomrule
	\end{tabular}
\end{table}

The Galois groups $G_l$ associated with these lines, together with explicit generators realized as projective transformations of\/ $\mathbb{P}^3$, were computed using GAP. Since these data are not required for the arguments in the main text, we omit them here; a complete list and further computational details can be found in \cite{Kato2024, KatoMaster2026}.

%\subsection*{Outline of the computational method}

%We briefly outline the method used to determine these Galois lines. Our approach is based on the classification of automorphism groups of compact Riemann surfaces of genus~$4$ due to Kuribayashi--Kuribayashi \cite{KuribayashiKuribayashi1986, KuribayashiKuribayashi1990}, which describes the induced actions on the space of holomorphic\/ $1$-forms. Via the canonical embedding, this yields a faithful linear representation
%\[
%	\Aut(C) \subset \Aut(\mathbb{P}^3).
%\]

\subsection*{An illustrative example}

As an illustration, we explain how to verify that the line $l_1$ is an $S_3$-line. The projection from $l_1$ is given explicitly by
\[
	\pi_{l_1} : (X:Y:Z:W) \longmapsto
	\bigl(
	(\zeta^2+\zeta^3)X + (\zeta^2+\zeta^3)Y + Z :
	(-\zeta^2-\zeta^3)X + Y + W
	\bigr).
\]

Let $G_{l_1}$ be the subgroup of\/ $\Aut(C)$ generated by the following two projective transformations:

\noindent $\sigma_1:=$
\[
	{%\setlength{\arraycolsep}{5pt}
		\frac{1}{5}
		\begin{pmatrix}
			-2 \zeta- \zeta^2-2 \zeta^3 & - \zeta^2-3 \zeta^3- \zeta^4 & - \zeta- \zeta^2-3 \zeta^4 & \zeta- \zeta^2- \zeta^3+ \zeta^4 \\
			- \zeta^2-3 \zeta^3- \zeta^4 & -2 \zeta-2 \zeta^2- \zeta^4 & - \zeta+ \zeta^2+ \zeta^3- \zeta^4 & -3 \zeta- \zeta^3- \zeta^4 \\
			- \zeta- \zeta^2-3 \zeta^4 & - \zeta+ \zeta^2+ \zeta^3- \zeta^4 & - \zeta-2 \zeta^3-2 \zeta^4 & - \zeta-3 \zeta^2- \zeta^3 \\
			\zeta- \zeta^2- \zeta^3+ \zeta^4 & -3 \zeta- \zeta^3- \zeta^4 & - \zeta-3 \zeta^2- \zeta^3 & -2 \zeta^2- \zeta^3-2 \zeta^4
		\end{pmatrix}, 
	}
\]

\noindent $\tau_1:=$
\[	
	\frac{1}{5}
	\scalebox{0.92}{$ 
		\setlength{\arraycolsep}{0pt}
		\begin{pmatrix}
			2 \zeta+3 \zeta^2+3 \zeta^3+2 \zeta^4 & 2 \zeta+ \zeta^2+2 \zeta^3 & 2 \zeta+2 \zeta^2+ \zeta^4 & -3 \zeta- \zeta^3- \zeta^4 \\
			2 \zeta^2+ \zeta^3+2 \zeta^4 & 3 \zeta+2 \zeta^2+2 \zeta^3+3 \zeta^4 & - \zeta-3 \zeta^2- \zeta^3 & 2 \zeta+2 \zeta^2+ \zeta^4 \\
			\zeta+2 \zeta^3+2 \zeta^4 & - \zeta^2-3 \zeta^3- \zeta^4 & 3 \zeta+2 \zeta^2+2 \zeta^3+3 \zeta^4 & 2 \zeta+ \zeta^2+2 \zeta^3 \\
			- \zeta- \zeta^2-3 \zeta^4 & \zeta+2 \zeta^3+2 \zeta^4 & 2 \zeta^2+ \zeta^3+2 \zeta^4 & 2 \zeta+3 \zeta^2+3 \zeta^3+2 \zeta^4
		\end{pmatrix}
	$}.
\]
By direct computer calculation, one verifies that
\[
	\sigma_1, \tau_1 \in \Aut(C), \qquad
	\pi_{l_1} \circ \sigma_1 = \pi_{l_1}, \qquad
	\pi_{l_1} \circ \tau_1 = \pi_{l_1},
\]
and that $G_{l_1} \cong S_3$. Since $\deg \pi_{l_1} \leq 6$, the morphism $\pi_{l_1} : C \longrightarrow \mathbb{P}^1$ is a Galois covering with Galois group $G_{l_1}$. Consequently, the line $l_1$ is an $S_3$-line.

\subsection*{Method summary}
We briefly summarize the method used in \cite{Kato2024, KatoMaster2026} to determine all $S_3$-lines and $K_4$-lines on the curve~$C$. Throughout this section, we use the notation $G(120)$, $G(2 \times 3)$, $G(2 \times 3, 2 \times 6)$, $G(3 \times 2)$, $G(2 \times 2)$, and $G(2 + 2)$ introduced by Kuribayashi--Kuribayashi \cite{KuribayashiKuribayashi1986, KuribayashiKuribayashi1990} for certain finite subgroups of $GL(4,\mathbb{C})$.

\medskip\noindent
\textbf{Step 1: Identification of automorphisms.}
By the classification of Kuribayashi--Kuribayashi \cite{KuribayashiKuribayashi1986, KuribayashiKuribayashi1990}, the automorphism group of $C$ is isomorphic to $G(120) \cong S_5$. Moreover, this classification describes explicitly the induced action of $\Aut(C)$ on the space of holomorphic\/ $1$-forms. Via the canonical embedding, this yields a faithful linear representation $\Aut(C) \subset GL(4,\mathbb{C})$, which is the starting point of our computations.

\medskip\noindent
\textbf{Step 2: Enumeration of candidate subgroups.} 
Using the above representation, we enumerate all subgroups of\/ $\Aut(C)$
isomorphic to $S_3$ or $K_4$. According to \cite{KuribayashiKuribayashi1986, KuribayashiKuribayashi1990}, every $S_3$-subgroup of\/ $\Aut(C)$ is conjugate in $GL(4,\mathbb{C})$ to one of
\[
	G(2 \times 3), \quad G(2 \times 3, 2 \times 6), \quad G(3 \times 2),
\]
while every $K_4$-subgroup is conjugate in $GL(4,\mathbb{C})$ to either
\[
	G(2 \times 2) \quad \text{or} \quad G(2 + 2).
\]

\medskip\noindent
\textbf{Step 3: Selection of Galois subgroups.}
Not every such subgroup gives rise to a Galois line. By Proposition~\ref{prop:generator-of-Gal-S_3-line}, an $S_3$-subgroup corresponds to an $S_3$-line if and only if it is conjugate in $PGL(4,\mathbb{C})$ to the group defined by the representation \eqref{eq:representation-of-S_3}. Among the above possibilities, this condition is satisfied only for $G(2 \times 3)$.
Similarly, by Proposition~\ref{prop:generator-of-Gal-K4-line}, a $K_4$-subgroup corresponds to a $K_4$-line if and only if it is conjugate in $PGL(4,\mathbb{C})$ to the group defined by \eqref{eq:standard-representation-K4}, which occurs only for $G(2 \times 2)$.
In practice, conjugacy in $GL(4,\mathbb{C})$ can be tested by comparing traces of group elements, which provides an effective and computationally inexpensive criterion.

\medskip\noindent
\textbf{Step 4: Recovery of the Galois lines.}
Once a Galois subgroup $G_l \subset \Aut(C)$ has been identified, the corresponding Galois line is recovered from fixed-point data.

For an $S_3$-subgroup $G_l$, we choose an element $\sigma \in G_l$ of order~$3$ and compute its fixed locus $\Fix(\sigma)$. This locus is the union of a line and two points. By Propositions~\ref{prop:generator-of-Gal-S_3-line} and \ref{prop:distinct_l_induce_distinct_G_l}, the line joining these two points is precisely the associated $S_3$-line.

For a $K_4$-subgroup $G_l$, we choose two elements $\tau_1, \tau_2 \in G_l$ of order~$2$ with nonzero trace. Each fixed locus $\Fix(\tau_i)$ consists of a plane and a point $P_i$. By Remark~\ref{rem:obtain_l_from_G_l=K4}, the line joining the two points $P_1$ and $P_2$ is the corresponding $K_4$-line.

\medskip\noindent
\textbf{Implementation.}
All of the above steps reduce to finitely many group-theoretic and linear-algebraic computations. The enumeration of subgroups, conjugacy tests, and fixed locus calculations were carried out using GAP.

\begin{remark}
	We make a remark on cyclic Galois lines. By applying the method summarized above in an analogous way, one can investigate cyclic Galois lines, namely $C_m$-lines. For the curve $C$ considered here, such an analysis shows that no cyclic Galois line exists.

	We also give an alternative argument. For this curve $C$, we have $\rank Q = 4$, so that $C$ admits exactly two trigonal pencils. By Propositions~\ref{prop:two_trigonal_are_Gal_S3Lines} and \ref{prop:two_trigonal_are_Gal_K4Lines}, the two trigonal pencils cannot be Galois simultaneously. On the other hand, since $C$ admits a $K_4$-line, Lemma~\ref{lemma:actions-for-trigonal-maps} implies that the two trigonal pencils are either both Galois or both non-Galois. It follows that neither of the two trigonal pencils is Galois.

	Moreover, by \cite[Corollary~3.5]{KomedaTakahashi2021}, the existence of a $C_4$-line implies $\rank Q = 3$. Since $\rank Q = 4$ for the present curve, there is no $C_4$-line. Similarly, \cite[Corollary~3.8]{KomedaTakahashi2021} shows that there is no $C_5$-line, and \cite[Corollary~2.6]{KomedaTakahashi2022} shows that there is no $C_6$-line. Hence, the curve $C$ admits no cyclic Galois line.
\end{remark}

%%%%%%%%%%%%%%%%%%%%%%%%%%%%%%%%%%%%%%%%%%%%%%%%%%%
\section*{Acknowledgements}

The authors would like to thank Professor Akira Ohbuchi of Tokushima University for valuable discussions and helpful advice on the computational approach used in the work of the first author.

%%%%%%%%%%%%%%%%%%%%%%%%%%%%%%%%%%%%%%%%%%%%%%%%%%%%%%%%%%%%%%%%%%%%%%%

\end{document}